\documentclass[12pt,oneside,reqno]{amsart}
\usepackage[top=2.5cm,bottom=2.5cm,left=3cm,right=3cm]{geometry}

\usepackage[T1]{fontenc}
\usepackage{amsmath,amssymb,amsthm,mathtools}
\IfFileExists{newtx.sty}{
  \usepackage[libertinus]{newtx}
}{
  \usepackage{libertinus}
}
\usepackage{microtype}
\usepackage{mathrsfs}
\DeclareFontFamily{U}{rsfs}{\skewchar\font127 }
\DeclareFontShape{U}{rsfs}{m}{n}{%
  <-6.5> rsfs5
  <6.5-8> rsfs7
  <8-> rsfs10
}{}
\usepackage[mathcal]{eucal}
\usepackage{enumitem}
\setlist{itemsep=1pt,topsep=4pt}
\numberwithin{equation}{section}
\usepackage{xcolor}
\definecolor{citeblue}{rgb}{0,0.35,0.75}
\usepackage[colorlinks=true,linkcolor=citeblue,citecolor=citeblue,urlcolor=citeblue]{hyperref}
\usepackage[nameinlink,noabbrev]{cleveref}
\Crefname{theorem}{Theorem}{Theorems}
\Crefname{conjecture}{Conjecture}{Conjectures}
\newtheorem{theorem}{Theorem}[section]
\newtheorem{proposition}[theorem]{Proposition}
\newtheorem{lemma}[theorem]{Lemma}

\newtheorem{conjecture}[theorem]{Conjecture}
\theoremstyle{definition}
\newtheorem{definition}[theorem]{Definition}
\theoremstyle{remark}
\newtheorem{remark}[theorem]{Remark}
\newtheorem{example}[theorem]{Example}
\DeclareMathOperator{\SO}{SO}
\DeclareMathOperator{\SL}{SL}
\DeclareMathOperator{\NS}{NS}
\DeclareMathOperator{\CH}{CH}
\DeclareMathOperator{\Aut}{Aut}

\newcommand{\Z}{\mathbb Z}
\newcommand{\Q}{\mathbb Q}
\newcommand{\R}{\mathbb R}
\newcommand{\OO}{\mathcal O}
\newcommand{\id}{\mathrm{id}}
\newcommand{\W}{\mathcal W}

\allowdisplaybreaks
\title{Bloch's conjecture for (anti-)symplectic automorphisms
of K3 surfaces}
\author{Zhiyuan Li}
\subjclass[2020]{Primary 14C15, 14J28; Secondary 14J50, 14F08, 20H05}
\date{\today}
\begin{document}
\begin{abstract}
We prove Bloch's conjecture for (anti-)symplectic automorphisms of smooth
projective complex K3 surfaces of Picard number two. Together with the recent
result for abelian surfaces and the known K3 cases, this settles the remaining case
of Bloch's conjecture for (anti-)symplectic automorphisms of
smooth projective complex surfaces with trivial canonical bundle.

More generally, we prove Bloch's conjecture for
(anti-)symplectic derived autoequivalences of projective
K3 surfaces whose Picard number is at least three, or is
two with anisotropic N\'eron--Severi lattice.
The key new result is the congruence property of the
subgroup generated by reflective involutions in $O(U\oplus N)$
for anisotropic even lattices $N$ of signature $(1,1)$.
Combined with the local generation theorem and Chow-theoretic
criterion of Li--Yu--Zhang, this gives the required reflective
decompositions and their consequences for zero-cycles.
\end{abstract}
\maketitle

\section{Introduction}

Throughout, Chow groups are taken with rational coefficients unless
otherwise stated.

Let $S$ be a smooth projective complex K3 surface. The
Bloch--Beilinson conjectures predict that the action of a
correspondence on zero-cycles is constrained by its action on
holomorphic forms. For automorphisms of $S$, this leads to the
following formulation of Bloch's conjecture.

\begin{conjecture}\label{conj:bloch}
For any $g_1,g_2\in\Aut(S)$,
\[
 g_1^*|_{T(S)}=g_2^*|_{T(S)}
 \quad\Longrightarrow\quad
 g_1^*=g_2^*\qquad\text{on }\CH_0(S),
\]
where $T(S)$ is the transcendental lattice of $S$.
\end{conjecture}

Equivalently, every symplectic automorphism $g$ of $S$ satisfies
\begin{equation}\label{eq:bloch-intro}
 g_*=\id\qquad\text{on }\CH_0(S).
\end{equation}
Here $g$ is symplectic if $g^*\omega_S=\omega_S$ for a nonzero
$\omega_S\in H^0(S,\Omega_S^2)$. For projective K3 surfaces,
this is equivalent to $g^*|_{T(S)}=\id$.
An automorphism is anti-symplectic if $g^*\omega_S=-\omega_S$,
equivalently $g^*|_{T(S)}=-\id$; the corresponding prediction is
$g_*=-\id$ on $\CH_0(S)_{\mathrm{hom}}$.

\subsection{Summary of known results}

The following results establish the conjecture in several
important cases.

\begin{theorem}\label{thm:previous}
Let $g$ be a symplectic automorphism of $S$. Then $g_*=\id$ on
$\CH_0(S)$ in each of the following cases:
\begin{enumerate}[labelindent=0pt,leftmargin=*,labelsep=0.6em]
\item $\operatorname{rk}_2\NS(S)\geq4$ and
$\operatorname{rk}_3\NS(S)\geq3$ \cite{HuySurvey};
\item $g$ has finite order \cite{Voisin,HuyFinite};
\item $g$ preserves an elliptic fibration on $S$ \cite{DL};
\item $\rho(S)\geq3$ \cite[Theorem~1.3]{LYZ};
\item $\rho(S)=2$ and $\Aut(S)\not\cong\Z$
\cite[Section~7.2]{LYZ}.
\end{enumerate}
Here $\operatorname{rk}_p$ is the maximal rank of a sublattice
whose discriminant is prime to $p$.
\end{theorem}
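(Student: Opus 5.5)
\Cref{thm:previous} collects results from the literature, so the plan is not to prove anything new. For each item I would check that the hypotheses of the cited source match the statement as written, and recall the mechanism behind it. That way the later arguments in this paper can reuse those mechanisms rather than just the conclusions. I take the items in turn and reduce each to its reference.

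For (1), I would invoke Huybrechts' argument from \cite{HuySurvey}. The rank conditions on $\NS(S)$ modulo $2$ and $3$ ensure, via lattice theory (Nikulin-type embedding and the strong approximation/generation results for $O(\widetilde H(S,\Z))$), that the Hodge isometry $g^*$ of $\widetilde H(S,\Z)$ decomposes as a product of reflections in spherical classes, up to the standard generators. Each such reflection is induced by a spherical twist. Spherical twists act on $\CH^*(S)$ compatibly with the Beauville--Voisin decomposition, so the induced action on $\CH_0(S)_{\mathrm{hom}}$ is determined by the action on $T(S)$. Combined with the derived Torelli theorem, this gives $g_*=\id$.

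For (2), I would cite Voisin \cite{Voisin} and Huybrechts \cite{HuyFinite}. The finite-order case reduces to prime order, using the classification of symplectic actions of finite groups (Nikulin). Voisin treats small orders through the geometry of the quotient and its resolution. Huybrechts handles the general finite-order case through derived equivalences and (1), since $\NS(S)$ then contains the coinvariant lattice $\Omega_G$ of rank large enough. For (3), \cite{DL} uses the preserved elliptic fibration. The automorphism acts fiberwise as translation composed with a base automorphism, and since $\CH_0$ of each fiber is controlled, the question reduces to a rational-equivalence statement on curves.

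For (4) and (5), I would quote \cite[Theorem~1.3]{LYZ} and \cite[Section~7.2]{LYZ}. Li--Yu--Zhang prove a local generation theorem for $O(\widetilde H(S,\Z))$ by reflections. Together with their Chow-theoretic criterion (reflective decompositions act trivially on $\CH_0(S)_{\mathrm{hom}}$ modulo transcendental data), this yields the result for $\rho\ge 3$. For $\rho=2$ with $\Aut(S)\not\cong\Z$, the automorphism group is finite or otherwise constrained, and one falls back to (2) or (3).

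The only step needing genuine care is confirming the precise hypotheses quoted in (1) and (5): the definition of $\operatorname{rk}_p$, and the exact exceptional case in $\rho=2$. That exceptional case $\Aut(S)\cong\Z$ is precisely the situation this paper must treat.
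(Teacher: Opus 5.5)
You are right that Theorem~\ref{thm:previous} is a compilation and that each item reduces to its reference, as in the paper, and your citations are the right ones. But the mechanism you give for item~(5) is wrong. For $\rho(S)=2$ and $\Aut(S)\not\cong\Z$, Proposition~\ref{prop:cyclic} only provides a vector of square $0$, $2$ or $-2$ in $\NS(S)$. A vector of square $0$ or $-2$ forces $\Aut(S)$ to be finite (classical for $\rho=2$), so (2) suffices and (3) is never needed.

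The remaining case is when $\NS(S)$ represents $2$ but neither $0$ nor $-2$, e.g.\ $\NS(S)\simeq\langle2\rangle\oplus\langle-6\rangle$. Then the ample cone is the whole positive cone. Every class $h$ of square $2$ in it gives a double cover $\pi_h\colon S\to\mathbb P^2$ whose covering involution $\iota_h$ is anti-symplectic, with $\iota_h^*|_{\NS(S)}=-s_h$. Since $\SO^+(\NS(S))$ is infinite, there are distinct such classes $h,h'$, and then $(h,h')>2$. So $\iota_h\iota_{h'}$ is a symplectic automorphism acting on $\NS(S)$ by the infinite-order product $s_{h'}s_h$. Moreover $S$ has no elliptic fibration, because $\NS(S)$ is anisotropic. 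Hence neither (2) nor (3) applies.

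The missing ingredient is that these anti-symplectic involutions act by $-\id$ on $\CH_0(S)_{\mathrm{hom}}$. One way to see it: $z+\iota_{h*}z=\pi_h^*\pi_{h*}z=0$ for $z\in\CH_0(S)_{\mathrm{hom}}$. Alternatively, in the language of this paper, $\iota_h[1]$ is symplectic and acts on $\widetilde{\NS}(S)=U\oplus\NS(S)$ by $(-\id_U)\oplus s_h=s_{e+f}s_{e-f}s_h$, a product of reflective involutions, so Theorem~\ref{thm:reflective-criterion} applies.

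There are three smaller inaccuracies. In (4), local generation plus the Chow-theoretic criterion is not the whole mechanism. The decomposition lives in $\widetilde{\NS}(S)$, not in $O(\widetilde H(S,\Z))$. The decisive global input is that the reflective subgroup is a congruence subgroup (\cite[Corollary~6.11]{LYZ}, used through Theorem~\ref{thm:congruence-reflective}). Example~\ref{ex:rank-two-full} shows that the resulting equality $\W(L)\cap O^+(L)=\widetilde O^+(L)$ can fail in Picard number two, and supplying this congruence property for anisotropic $\NS(S)$ is what the paper adds. In (2), Voisin's theorem is specifically about involutions, via Prym varieties of curves on the quotient, not ``small orders''. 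In (1), the lattice input is Kneser's generation theorem rather than an embedding or strong-approximation argument.
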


We briefly recall the methods underlying these results.

\subsubsection*{1}
Huybrechts relates the action of spherical twists on Chow
groups to reflections in Mukai cohomology. Kneser's generation
theorem then gives the conjecture under the local lattice
conditions in Theorem~\ref{thm:previous}(1) \cite{HuySurvey}.

\subsubsection*{2}
Voisin proves the conjecture for symplectic involutions using
Prym varieties associated with curves on the quotient surface
\cite{Voisin}. Combining Voisin's theorem with his earlier results,
Huybrechts proves the conjecture for symplectic automorphisms
of arbitrary finite order \cite[Theorem~0.1]{HuyFinite}.

\subsubsection*{3}
Du--Liu use the relative Jacobian and the group structure of
elliptic fibers to treat symplectic automorphisms preserving
an elliptic fibration \cite{DL}. Their result also applies to
automorphisms of infinite order.

\subsubsection*{4}
Building on Huybrechts' approach, Li--Yu--Zhang study Bloch's
conjecture for derived autoequivalences. A Fourier--Mukai
autoequivalence $\Phi:D^b(S)\to D^b(S)$ induces an action
\[
 \Phi^{\CH}:\CH_0(S)_{\mathrm{hom}}
       \longrightarrow\CH_0(S)_{\mathrm{hom}}.
\]
Here $\CH_0(S)_{\mathrm{hom}}$ is the degree-zero subgroup.
The autoequivalence is symplectic (resp.\ anti-symplectic) if its
cohomological action on $H^{2,0}(S)$ is $\id$ (resp.\ $-\id$),
equivalently if its restriction to $T(S)$ is $\id$
(resp.\ $-\id$). Li--Yu--Zhang relate its action on
zero-cycles to the induced isometry of the algebraic Mukai
lattice
\[
 \widetilde{\NS}(S)=\Z\oplus\NS(S)\oplus\Z
 \simeq U\oplus\NS(S).
\]
Their main criterion is the following.

\begin{theorem}[{\cite[Theorem~1.2]{LYZ}}]
\label{thm:reflective-criterion}
Let $\Phi$ be a symplectic (resp.\ anti-symplectic) derived
autoequivalence of $S$.
If its action on $\widetilde{\NS}(S)$ is a product of
reflective involutions, then
\[
 \Phi^{\CH}=\id\quad\text{(resp.\ $-\id$)}\qquad\text{on }\CH_0(S)_{\mathrm{hom}}.
\]
\end{theorem}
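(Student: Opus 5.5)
This is the criterion of Li--Yu--Zhang. The plan is to reduce to the case of a single reflective involution and then multiply. The action $\Phi\mapsto\Phi^{\CH}$ on $\CH_0(S)_{\mathrm{hom}}$ is multiplicative under composition. So it suffices to show two things:
\begin{enumerate}
\item every reflective involution $\tau$ of $\widetilde{\NS}(S)$ is realized by some derived autoequivalence $\Psi_\tau$ whose action on $T(S)$ is $\pm\id$ and whose action on $\CH_0(S)_{\mathrm{hom}}$ is the corresponding $\pm\id$;
\item the action of $\Phi$ on $\CH_0(S)_{\mathrm{hom}}$ depends only on its cohomological action on the full Mukai lattice $\widetilde H(S,\Z)$.
\end{enumerate}
Granting both, write $\Phi^H|_{\widetilde{\NS}}=\tau_1\cdots\tau_k$. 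Extend each $\tau_i$ by $\pm\id$ on $T(S)$, with the sign chosen so that the product has sign $+$ (resp.\ $-$) on $T(S)$, matching the (anti-)symplectic assumption on $\Phi$. The extensions glue to Hodge isometries because $\tau_i$ acts as $\pm1$ on the discriminant group. Then $\Phi$ and $\Psi_{\tau_1}\circ\cdots\circ\Psi_{\tau_k}$ (up to shifts) have the same cohomological action, and the conclusion follows.

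For step (1), a reflective involution is an isometry $\tau$ of $\widetilde{\NS}(S)$ with $\tau^2=\id$ whose $(-1)$-eigenlattice (or $(+1)$-eigenlattice) is negative definite and small. The model case is the reflection $s_\delta$ in a $(-2)$-vector $\delta=v(E)$ with $E$ spherical, which is realized by the spherical twist $T_E$. Huybrechts showed that $T_E$ acts on $\CH_0$ compatibly with $s_\delta$, via the Beauville--Voisin class $c_S$ and the fact that $v(E)$ lies in $\CH^*(S)$ spanned by $c_S$ and $\NS(S)$. For general reflective involutions I would follow the same pattern with a $\P^n$-type or family-of-spherical objects construction. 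Alternatively, I would realize $\tau$ as $-\id$ composed with a product of reflections in $(-2)$-vectors of the negative-definite eigenlattice, using Kneser-type generation in small rank together with the shift $[1]$, which acts by $-\id$ on cohomology.

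Step (2) is the main obstacle. One needs that an autoequivalence acting trivially on $\widetilde H(S,\Z)$ acts trivially on $\CH_0(S)_{\mathrm{hom}}$. I would use Bridgeland's description: $\ker(\mathrm{Aut}\,D^b(S)\to O(\widetilde H))$ is generated by squares of spherical twists and shifts by $[2]$, at least in the Picard-number-one case, and conjecturally in general, assuming the Bayer--Bridgeland type results. Each square $T_E^2$ acts trivially on $\CH_0$ by Huybrechts' computation, and $[2]$ acts trivially. If that generation is unavailable, I would instead argue directly: the kernel of $\Phi$ is an object whose Mukai vector in $\CH^*(S\times S)$ lies in the Beauville--Voisin subring, by Voisin's results on Chern classes of such kernels. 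Consequently its action on $\CH_0(S)_{\mathrm{hom}}$ is determined by the cohomology class, which would settle (2) without any group-theoretic input.
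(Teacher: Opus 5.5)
The paper does not prove this statement; it quotes it from \cite{LYZ}. So there is no internal argument to compare with, and your proposal has to stand on its own. It has a genuine gap.

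\textbf{Step (2).} Your multiplicativity reduction is fine, but step (2) is not an auxiliary lemma. It is exactly the case of the statement in which the product of reflective involutions is empty. If $\Phi$ is symplectic with $\Phi^{\widetilde{\NS}}=\id$, then $\Phi$ acts trivially on all of $\widetilde H(S,\Z)$, and the theorem asserts $\Phi^{\CH}=\id$ on $\CH_0(S)_{\mathrm{hom}}$. Conversely, since $\Phi\mapsto\Phi^{\CH}$ and $\Phi\mapsto\Phi^{H}$ are homomorphisms, your (2) is equivalent to this special case. So you have assumed the hard part, and neither of your justifications closes it.
\begin{itemize}
\item That the kernel of $\Aut D^b(S)\to O(\widetilde H(S,\Z))$ is generated by the $T_E^2$ and $[2]$ is a consequence of Bridgeland's conjecture on $\mathrm{Stab}^\dagger(S)$. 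This is known (Bayer--Bridgeland) only for $\rho(S)=1$, whereas the criterion is needed for $\rho(S)\ge2$; this paper applies it at $\rho(S)=2$.
\item No result places the Chow Mukai vector of an arbitrary Fourier--Mukai kernel in a subring of $\CH^*(S\times S)$ that injects into cohomology. The results of Huybrechts and Voisin concern classes on $S$ itself, such as $v(E)$ for spherical $E$. The analogue for kernels is an open Bloch--Beilinson-type statement that would by itself give (2).
\end{itemize}

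\textbf{Step (1).} This also fails as formulated. The reflective involutions of Definition~\ref{def:reflective} are the reflections $s_r$ with $r^2=\pm2$, and the involutions $j$ attached to a primitive isotropic $u$. The $(-1)$-eigenspace of such a $j$ is a hyperbolic plane containing $u$, not a small negative definite lattice. Each $s_r$ with $r^2=2$, and each $j$, acts as $\operatorname{diag}(-1,1)$ on a suitable positive two-plane of $\widetilde{\NS}(S)$, so it reverses orientation. Extending by $\pm\id$ on $T(S)$ does not change this, so by \cite{HMSOrientation} no derived autoequivalence realizes these involutions. Your alternative ($-\id$ times reflections in $(-2)$-vectors) is orientation-preserving and cannot produce them either.

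Such factors are realized instead by anti-autoequivalences. For example, $E\mapsto E^\vee[1]$ acts on $\widetilde H(S,\Z)$ by $J\oplus\id_{T(S)}$, with $J=(-\id_U)\oplus\id_{\NS(S)}$. Yet it acts by $-\id$ on $\CH_0(S)_{\mathrm{hom}}$, because $k(x)^\vee[1]\simeq k(x)[-1]$. So the rule ``sign on $T(S)$ equals sign on $\CH_0(S)_{\mathrm{hom}}$'' cannot be applied factor by factor. Orientation-reversing factors must be grouped in pairs, with the signs on $T(S)$ and on zero-cycles tracked separately.
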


The connection with the stable orthogonal group is as follows.
Every derived autoequivalence preserves the orientation of the
positive two-planes in $\widetilde{\NS}(S)$
\cite{HMSOrientation}. Since the Mukai lattice is unimodular,
the actions on the discriminant groups of
$\widetilde{\NS}(S)$ and its orthogonal complement $T(S)$
agree under the gluing identification. Consequently,
\begin{equation}\label{eq:mukai-stable}
 \begin{aligned}
 \Phi\text{ symplectic}
 &\quad\Longrightarrow\quad
 \Phi^{\widetilde{\NS}}\in
 \widetilde O^+\bigl(\widetilde{\NS}(S)\bigr),\\
 \Phi\text{ anti-symplectic}
 &\quad\Longrightarrow\quad
 -\Phi^{\widetilde{\NS}}\in
 \widetilde O^+\bigl(\widetilde{\NS}(S)\bigr).
 \end{aligned}
\end{equation}
Here $\widetilde O^+$ consists of orientation-preserving
isometries acting trivially on the discriminant group.
For the second implication, $-\id$ preserves the orientation
of positive two-planes. Equivalently, $\Phi[1]$ is symplectic,
and $(\Phi[1])^{\CH}=-\Phi^{\CH}$. Thus the anti-symplectic
case reduces to the symplectic case by a shift.

For $\rho(S)\geq3$, Li--Yu--Zhang obtain the required
decomposition by proving that the reflective subgroup is
a congruence subgroup; see
Theorem~\ref{thm:congruence-reflective}.
When $\rho(S)=2$, they also prove
Conjecture~\ref{conj:bloch} under the assumption
$\Aut(S)\not\cong\Z$
\cite[Section~7.2]{LYZ}.

The remaining automorphism case has $\Aut(S)\cong\Z$.
Its N\'eron--Severi lattice is anisotropic; see
Proposition~\ref{prop:cyclic}. We establish the congruence
property for every anisotropic even lattice of signature
$(1,1)$, which also yields a result for derived autoequivalences.

\subsection{Main result}

Our result extends the reflective decomposition method to
Picard number two with anisotropic N\'eron--Severi lattice.

\begin{theorem}\label{thm:derived}
Let $S$ be a smooth projective complex K3 surface. Assume that
either $\rho(S)\geq3$, or $\rho(S)=2$ and $\NS(S)$ contains
no nonzero isotropic vector. Then every symplectic
(resp.\ anti-symplectic) derived autoequivalence $\Phi$ of $S$ satisfies
\[
 \Phi^{\CH}=\id\quad\text{(resp.\ $-\id$)}
 \qquad\text{on }\CH_0(S)_{\mathrm{hom}}.
\]
\end{theorem}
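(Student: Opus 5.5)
We reduce the theorem to the Li--Yu--Zhang criterion, Theorem~\ref{thm:reflective-criterion}. By the shift argument after \eqref{eq:mukai-stable}, it suffices to treat the symplectic case. If $\Phi$ is anti-symplectic, then $\Phi[1]$ is symplectic and $(\Phi[1])^{\CH}=-\Phi^{\CH}$. So let $\Phi$ be symplectic. By \eqref{eq:mukai-stable}, its action $\gamma:=\Phi^{\widetilde{\NS}}$ lies in $\widetilde O^+(\widetilde{\NS}(S))$, with $\widetilde{\NS}(S)\simeq U\oplus \NS(S)$. The task is to show that $\gamma$ is a product of reflective involutions of $\widetilde{\NS}(S)$. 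Theorem~\ref{thm:reflective-criterion} then gives $\Phi^{\CH}=\id$ on $\CH_0(S)_{\mathrm{hom}}$.

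When $\rho(S)\geq 3$, this is the congruence theorem of Li--Yu--Zhang (Theorem~\ref{thm:congruence-reflective}). It says that the subgroup $R$ generated by reflective involutions contains $\widetilde O^+(U\oplus\NS(S))$, or at least contains a congruence subgroup. Combined with their local generation theorem, this yields $\widetilde O^+\subset R$. So this case is already covered by \cite[Theorem~1.3]{LYZ} in its derived form, and we only recall it.

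The new case is $\rho(S)=2$ with $N:=\NS(S)$ an anisotropic even lattice of signature $(1,1)$. The plan has two steps.

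\emph{Step 1 (congruence).} We show that $R\subset O(U\oplus N)$, the group generated by reflective involutions, contains a principal congruence subgroup $\Gamma(m)=\ker\bigl(O^+(U\oplus N)\to O(U\oplus N\otimes \Z/m)\bigr)$ for some $m$.

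\emph{Step 2 (local generation).} We show that $R$ surjects onto $\widetilde O^+(L\otimes\Z_p)$ for every $p\mid m$. This is the local generation theorem of Li--Yu--Zhang applied at each prime, combined with strong approximation. It upgrades Step~1 to $\widetilde O^+(U\oplus N)\subset R$.

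For Step~1 we use the accidental isomorphism in signature $(2,2)$. The lattice $L=U\oplus N$ has signature $(2,2)$, and $\SO^+(L\otimes\R)$ is isogenous to $\SL_2(\R)\times\SL_2(\R)$. Because $N$ is anisotropic of signature $(1,1)$, $\operatorname{disc}$ is not a square. Hence the even Clifford algebra picture identifies $\SO^+(L\otimes\Q)$, up to isogeny, with $R_{K/\Q}\SL_2$ or with $\SL_2$ over a quaternion algebra over the real quadratic field $K=\Q(\sqrt{-\operatorname{disc}N})$. Concretely, $\widetilde{SO}^+(L)$ becomes commensurable with a Hilbert modular group $\SL_2(\OO_K)$, or a twist of it.

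The reflective involutions we use are of two kinds. The first kind is reflections $s_v$ in vectors $v\in L$ of norm $\pm2$, and more generally vectors $v$ with $2v/v^2\in L^\vee$. The second kind is products $-s_v$ for $v\in N$. Under the identification, these should correspond to involutions that generate, together with their conjugates by isotropic unipotents coming from $U$, the elementary subgroup $E_2(\OO_K,\mathfrak a)$ for a suitable ideal $\mathfrak a$.

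To see this, we produce explicit Eichler transvections $E(e,x)$, with $e\in U$ isotropic and $x\in N$, as products of two reflective involutions. This works for $x$ in a finite-index sublattice of $N$ whenever $x^2/2$ satisfies a divisibility condition. These transvections generate the unipotent radicals of two opposite parabolics, up to finite index. Then Vaserstein's theorem (or Serre's solution of the congruence subgroup problem for $\SL_2$ over $\OO_K$, where $\OO_K^\times$ is infinite since $K$ is real quadratic) shows that the group they generate contains a congruence subgroup. This is exactly the point where anisotropy of $N$ enters: it makes $K$ a field rather than $\Q\times\Q$, which is the split case of $\SL_2(\Z)\times\SL_2(\Z)$ where the congruence property fails.

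The main obstacle is Step~1, specifically matching the explicit reflective involutions with elementary matrices and controlling the level. If the direct Eichler-transvection approach is messy, I would first try to pass to the finite-index sublattice $U\oplus N'$ with $N'\subset N$ of the form $\langle 2a\rangle\oplus\langle -2b\rangle$ or a binary form with nonsquare discriminant. I would compute there and then transfer back, since containing a congruence subgroup is stable under commensurability.

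Finally, we assemble the pieces. Since $\widetilde O^+(L)\subset R$, $\gamma$ is a product of reflective involutions, and Theorem~\ref{thm:reflective-criterion} concludes the proof.
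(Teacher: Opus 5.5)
Your outer structure matches the paper's: you shift to the symplectic case, cite \cite[Theorem~1.3]{LYZ} for $\rho\ge3$, and for $\rho=2$ prove a congruence statement and feed it into Theorem~\ref{thm:congruence-reflective} and Theorem~\ref{thm:reflective-criterion}. Your finite-index mechanism is also a workable alternative to the paper's. The paper applies Theorem~\ref{thm:margulis} to the normal subgroup generated by spin lifts of the products $s_us_v$ with $u^2=v^2=-2$. You instead lift $T_e(v),T_f(v)$ to opposite unipotent subgroups of $G=\operatorname{Res}_{K/\Q}\SL_2$ and use Vaserstein's theorem that elementary matrices of a fixed level generate a finite-index subgroup of $\SL_2(\OO_K)$, followed by Theorem~\ref{thm:serre}. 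Three small corrections:
\begin{itemize}
\item No divisibility condition is needed. $T_e(v)=J\cdot JT_e(v)$ is an even product of Huybrechts involutions for every $v\in N$, so it lies in $\W^\pm(L)$.
\item Serre's theorem alone says nothing about a non-normal subgroup of unknown index, so Vaserstein's result is genuinely required in your version.
\item The quaternion alternative cannot occur, because $V\supset U_\Q$ is isotropic.
\end{itemize}

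The genuine gap is the passage from the spin group to $\SO(L)$. Theorem~\ref{thm:congruence-reflective}, your Step~2, needs $\W^\pm(L)\supseteq\SO(L,m)$ in the orthogonal group itself. Your argument only shows that the lifted group $\widehat E\subset G(\Q)$ contains a congruence subgroup of $G$. Two things are missing.
\begin{itemize}
\item $\pi$ is not surjective on rational points, so $m$ must be chosen so that $\SO(L,m)$ has trivial spinor norm.
\item More seriously, a lift $t$ of $g\in\SO(L,m)$ only satisfies $t_p\in\varepsilon_pC_p$, with signs $\varepsilon_p=\pm1$ that vary independently over the primes $p\in T$ defining the spin congruence subgroup. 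The global $-1$ flips all of them at once.
\end{itemize}

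The transvection group alone cannot repair the sign problem in general. Write $C^+(L)=ef\,C^+(N)\oplus fe\,C^+(N)\oplus e\,C^-(N)\oplus f\,C^-(N)$, which multiplies like $2\times2$ matrices. The lifts $1+ev$ and $1+fw$, and hence all of $\widehat E$, have diagonal entries $\equiv1$ modulo the ideal generated by the products $vw$ with $v,w\in N$. Take $N=N_0(p_1p_2)$ with $N_0$ anisotropic and $p_1\neq p_2$ odd primes. Then this ideal is proper at both primes, so no element of $\pm\widehat E$ is close to $-1$ at $p_1$ and to $+1$ at $p_2$. On the other hand, strong approximation in $G$ produces such lifts of elements of $\SO(L,m)$ for every $m$. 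Hence $\pi(\widehat E)$ contains no $\SO(L,m)$.

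The missing idea is the third step in the proof of Proposition~\ref{prop:binary-congruence}. Put $u=e-f$. By strong approximation on the quadric $q(v)=-1$ \cite[Theorem~1.2]{CHZ}, choose $v\in L$ close to $\varepsilon_pu$ for all $p\in T$. Then $c=-uv$ lifts $s_us_v\in\W^\pm(L)$ with $c_p\in\varepsilon_pC_p$, so $tc^{-1}$ lies in the spin congruence subgroup. Matching involutions with elementary matrices, which you flagged as the main obstacle, is immediate; this sign correction is where the real work lies.
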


Together with the known cases, this proves the conjecture
for (anti-)symplectic automorphisms.

\begin{theorem}\label{thm:main}
Conjecture~\ref{conj:bloch} holds for every smooth projective
complex K3 surface. Moreover, if $g$ is a symplectic
(resp.\ anti-symplectic) automorphism of $S$, then
\[
 g_*=\id\quad\text{(resp.\ $-\id$)}
 \qquad\text{on }\CH_0(S)_{\mathrm{hom}}.
\]
\end{theorem}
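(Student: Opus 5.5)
We first reduce Conjecture~\ref{conj:bloch} to the second assertion. If $g_1^*|_{T(S)}=g_2^*|_{T(S)}$, then $g=g_2 g_1^{-1}$ is symplectic. Knowing $g_*=\id$ on $\CH_0(S)_{\mathrm{hom}}$ gives $g_*=\id$ on all of $\CH_0(S)$, because $g_*$ preserves degree and fixes the Beauville--Voisin class $c_S$. This class is canonical and is fixed by every automorphism. Hence $g_1^*=g_2^*$ on $\CH_0(S)$. So it suffices to treat a single symplectic or anti-symplectic automorphism $g$.

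Next we view $g$ as a derived autoequivalence. The pushforward $g_*\colon D^b(S)\to D^b(S)$ is a Fourier--Mukai autoequivalence with kernel $\OO_{\Gamma_g}$. Its induced map on $\CH_0(S)_{\mathrm{hom}}$ is exactly $g_*$, and its action on $H^{2,0}(S)$ agrees with that of $g$ (up to inverting $g$, which does not change (anti-)symplecticity). Theorem~\ref{thm:derived} therefore settles every case with $\rho(S)\geq3$. It also settles the case $\rho(S)=2$ with $\NS(S)$ anisotropic.

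The remaining cases are handled as follows.
\begin{enumerate}
\item If $\rho(S)=1$, then $\Aut(S)$ is finite. A symplectic $g$ has finite order, so $g_*=\id$ by Theorem~\ref{thm:previous}(2). For anti-symplectic $g$, we apply the same theorem to the finite-order symplectic automorphism $g^2$. Alternatively, we apply the derived criterion directly: for $\rho=1$, $\widetilde{\NS}(S)\simeq U\oplus\langle 2d\rangle$ has rank $3$, so the Li--Yu--Zhang argument for rank at least $3$ covers it. We would check which of these is cleanest.
\item If $\rho(S)=2$ and $\NS(S)$ contains a nonzero isotropic vector, then $S$ carries an elliptic fibration. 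By the standard dichotomy, either $\Aut(S)$ is finite or $\Aut(S)\not\cong\Z$. This is exactly the contrapositive content of Proposition~\ref{prop:cyclic}, which says that $\Aut(S)\cong\Z$ forces anisotropy. In the first subcase $g$ has finite order and Theorem~\ref{thm:previous}(2) applies. In the second subcase Theorem~\ref{thm:previous}(5) applies.
\item For anti-symplectic $g$ in these cases, $g^2$ is symplectic. We then need $g_*=-\id$ on $\CH_0(S)_{\mathrm{hom}}$ itself, not only on its square. If $g$ has finite order, the anti-symplectic analogue of Huybrechts--Voisin gives this. Otherwise we invoke the anti-symplectic statements of \cite{LYZ} in Section~7.2, which treat $\Phi$ and $\Phi[1]$ uniformly.
\end{enumerate}

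The main obstacle is bookkeeping rather than new mathematics. We must make sure that every $\rho=2$ isotropic configuration is really covered by finite order or by $\Aut(S)\not\cong\Z$. This requires Proposition~\ref{prop:cyclic}: an isotropic class with infinite automorphism group must yield either a preserved elliptic fibration with an infinite Mordell--Weil contribution, or a noncyclic group. We also need the anti-symplectic case in those subcases to be available in the cited sources. If it is not, we fall back on item (3) of Theorem~\ref{thm:previous}: a $g$ preserving the isotropic class (up to $\pm$) preserves the fibration, and there Du--Liu's argument gives the result.
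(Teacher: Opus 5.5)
Your proposal is correct and is essentially the paper's argument. The only new input is Theorem~\ref{thm:derived} for $\rho(S)=2$ with $\NS(S)$ anisotropic. Your split of the rank-two case into anisotropic versus isotropic $\NS(S)$ is equivalent, via Proposition~\ref{prop:cyclic}, to the paper's split into $\Aut(S)\cong\Z$ versus $\Aut(S)\not\cong\Z$. As in the paper, $\rho(S)\geq3$, $\rho(S)=1$ and the remaining rank-two case are left to known results.

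One correction: discard the ``alternative'' in item~(1). The Li--Yu--Zhang hypothesis is $\rho(S)\geq3$, so $\operatorname{rk}\widetilde{\NS}(S)\geq5$, not $\operatorname{rk}\widetilde{\NS}(S)\geq3$. For $\rho(S)=1$ the group $\SO(\widetilde{\NS}(S))$ is commensurable with $\mathrm{PSL}_2(\Z)$, which lacks the congruence subgroup property, so that route fails. The anti-symplectic case with $\rho(S)=1$ must instead go through your finite-order argument in item~(3). This is immediate: there $g$ is the covering involution of a double plane, and $g_*+\id$ factors through $\CH_0(\mathbb P^2)_{\mathrm{hom}}=0$.
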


\begin{remark}\label{rem:integral-coefficients}
Theorem~\ref{thm:main} also holds with integral coefficients.
Indeed, Roitman's theorem \cite{Roitman} implies that the
integral degree-zero Chow group of a complex K3 surface is
torsion-free, so the rational equalities lift to integral ones.
\end{remark}

\begin{remark}
For abelian surfaces, Chen--Li--Zhang \cite[Theorem~A]{CLZAbelian}
prove the corresponding statement for all (anti-)symplectic
derived autoequivalences on the Albanese kernel.
Together with Theorem~\ref{thm:main}, this establishes Bloch's
conjecture for (anti-)symplectic automorphisms of every smooth
projective complex surface with trivial canonical bundle.
\end{remark}

The key input is the following statement. We write
$\W(L)$ for the subgroup generated by reflective involutions.

\begin{theorem}\label{thm:anisotropic-congruence}
Let $N$ be an even lattice of signature $(1,\rho-1)$, where
$\rho\geq2$. If $N$ contains no nonzero isotropic vector,
then $\W(U\oplus N)$ is a congruence subgroup of $O(U\oplus N)$.
\end{theorem}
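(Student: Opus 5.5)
The case $\rho\geq3$ is already covered by the result of Li--Yu--Zhang recalled as Theorem~\ref{thm:congruence-reflective}. An anisotropic even lattice of signature $(1,\rho-1)$ with $\rho\ge 5$ does not exist, since every indefinite rational quadratic form of rank $\geq5$ is isotropic. So the new content is the case $\rho=2$, and we concentrate there. Set $L=U\oplus N$. This lattice has signature $(2,2)$ and contains the hyperbolic plane $U$ with basis $e,f$. Write $N_\Q$ as the norm form of the real quadratic field $K=\Q(\sqrt d)$, scaled by a rational constant, where $-d$ is the discriminant of $N$. Anisotropy of $N$ means exactly that $d$ is not a square. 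Then $L_\Q$ is $K\oplus\Q^2$ with the form $\operatorname{Nm}$ twisted, and $\SO(L_\Q)$ is isogenous to $R_{K/\Q}\SL_2$. Under this identification $\widetilde O^+(L)$ becomes commensurable with a Hilbert modular group $\SL_2(\mathcal O)$, for a suitable order $\mathcal O\subset K$.

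We plan to use the congruence subgroup property for $\SL_2(\mathcal O_K)$, $K$ real quadratic, due to Serre (and Bass--Milnor--Serre / Vaserstein): since $\mathcal O_K^\times$ is infinite, every finite-index subgroup of $\SL_2(\mathcal O)$ is a congruence subgroup. It therefore suffices to show that $\W(L)$ has \emph{finite index} in $O(L)$. The congruence property then transfers through the isogeny, because finite-index subgroups of arithmetic groups commensurable with a group having CSP are again congruence.

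To prove finite index, we will write down explicit reflective involutions. For each $v\in N$ and each $n$ with appropriate divisibility, reflections in vectors $e+nf+v$ and related vectors are reflective whenever $2(x,L)/(x,x)\subset\Z$. Products of two such reflections with the same $\pm$-norm give Eichler transvections $E(e,v)$ and $E(f,v)$ for $v$ in a finite-index sublattice $mN$. By the local generation theorem of Li--Yu--Zhang, or directly by Eichler's criterion, the Eichler transvections $\{E(e,v),E(f,v): v\in mN\}$ generate a subgroup containing the principal congruence subgroup of some level. Under the isogeny to $R_{K/\Q}\SL_2$, these transvections are the elementary unipotent matrices $\begin{pmatrix}1&a\\0&1\end{pmatrix}$ and $\begin{pmatrix}1&0\\a&1\end{pmatrix}$ with $a$ ranging over an ideal $\mathfrak q$ of $\mathcal O$. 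The group $E(\mathfrak q)$ they generate has finite index in $\SL_2(\mathcal O)$; this is exactly the Serre/Vaserstein statement, which uses the infinite unit group. Hence $\W(L)$ has finite index, and is therefore congruence.

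The main obstacle is this last finite-index step, since it is where anisotropy enters. In the isotropic case $K$ would be $\Q\times\Q$, and $\SL_2(\Z)\times\SL_2(\Z)$ fails CSP; that is exactly why the hypothesis is needed. Concretely, I would first verify carefully that the isogeny carries Eichler transvections to elementary matrices over an ideal. Then I would check that the spinor norm and discriminant kernel issues only cost finite index. Finally I would cite Serre's theorem that $E(\mathfrak q)$ is normal of finite index and contains a principal congruence subgroup for rings of $S$-integers with infinitely many units.
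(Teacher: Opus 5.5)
The gap is the reduction ``it suffices to show that $\W(L)$ has finite index'' and the closing ``hence $\W(L)$ has finite index, and is therefore congruence.''

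\textbf{Why the transfer fails.} Serre's theorem gives the congruence property for the simply connected group $G=\operatorname{Spin}(V)\simeq\operatorname{Res}_{K/\Q}\SL_2$. It does not pass through the isogeny $\pi\colon G\to\SO(V)$, whose kernel is $\{\pm1\}$. Put $\Gamma=\pi^{-1}(\SO(L))\cap G(\Q)$, and let $\Gamma_C=\{t\in\Gamma: t_p\in C_p \text{ for } p\in T\}$ be a congruence subgroup of $\Gamma$ with $-1\notin C_p$.
\begin{itemize}
\item An element of $\SO(L,m)$, if it lifts at all, lifts to some $t$ with $t_p\in\varepsilon_pC_p$. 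The signs $\varepsilon_p\in\{\pm1\}$ are independent at the different $p\in T$, while the global element $-1$ flips all of them at once.
\item By strong approximation for $G$, every sign pattern $(\varepsilon_p)$ occurs among lifts of elements of $\SO(L,m)$, for every $m$.
\item Hence, for $|T|\ge2$, the finite-index subgroup $\pi(\Gamma_C)$ of $\SO(L)$ contains no $\SO(L,m)$.
\end{itemize}
So $\SO(L)$ has finite-index non-congruence subgroups, and passing to normal cores in the profinite completion gives normal ones too. Neither finite index nor normality of $\W(L)$ therefore yields the statement. For the same reason, your remark that the spinor norm ``only costs finite index'' is not enough: the spinor-norm obstruction has to be killed on some $\SO(L,m)$.

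\textbf{What is missing.} You need an argument that $\widehat\W=\pi^{-1}(\W^\pm(L))\cap G(\Q)$ realizes every local sign pattern. That is, for each $(\varepsilon_p)_{p\in T}$ you need some $c\in\widehat\W$ with $c_p\in\varepsilon_pC_p$, so that $tc^{-1}\in\Gamma_C\subseteq\widehat\W$. The paper gets $c$ from a product of two $(-2)$-reflections:
\begin{itemize}
\item Put $u=e-f$. Strong approximation on the quadric $q(v)=-1$ produces $v\in L$ close to $\varepsilon_pu$ for all $p\in T$.
\item Then $c=-uv$ in the Clifford algebra satisfies $\pi(c)=s_us_v\in\W^\pm(L)$, and $c_p$ is close to $-u(\varepsilon_pu)=\varepsilon_p$.
\item Separately, the paper chooses $m$ so that every element of $\SO(L,m)$ has trivial rational spinor norm, hence a lift in $\Gamma$.
\end{itemize}

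\textbf{What does work.} Your first half is a reasonable substitute for the paper's Margulis argument for showing that $\widehat\W$ is congruence in $G$. There, Eichler transformations lift to elementary matrices over an ideal, and Serre's theorem is applied to the group they generate. The Eichler transformations do lie in $\W(L)$, via the Huybrechts involutions $JT_e(v)$. Your justification is not right, though: general integral reflections with $2(x,L)/(x,x)\subseteq\Z$ need not lie in $\W(L)$. Without the sign correction, this half stops short of $\SO(L,m)\subseteq\W^\pm(L)$.

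\textbf{The case $\rho\ge3$.} You cannot cite Theorem~\ref{thm:congruence-reflective} here, since it assumes the congruence hypothesis. The input for $\rho\ge3$ is Li--Yu--Zhang, Corollary~6.11.
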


The case $\rho\geq3$ follows from
\cite[Corollary~6.11]{LYZ}. For $\rho=2$, we adapt their
argument to prove Theorem~\ref{thm:anisotropic-congruence}
in three steps.
\begin{enumerate}[labelindent=0pt,leftmargin=*,labelsep=0.6em]
\item Anisotropy identifies the rational spin group with
$\operatorname{Res}_{K/\Q}\SL_2$ for a real quadratic field
$K$. The corresponding arithmetic lattice is irreducible
in $\SL_2(\R)^2$.
\item The inverse image of $\W^\pm$ contains an infinite
normal subgroup. Margulis's normal subgroup theorem gives
finite index, and Serre's theorem gives the congruence property.
\item Strong approximation for vectors of fixed norm corrects
the local signs of spin lifts, giving the congruence property
in the orthogonal group.
\end{enumerate}
The local generation theorem of Li--Yu--Zhang then gives the
reflective decomposition, and
Theorem~\ref{thm:reflective-criterion} gives the action on
zero-cycles. The isotropic example in
Section~\ref{sec:lattices} explains why the full reflective
decomposition need not hold for every lattice of rank two.

\subsection*{Acknowledgements}

The author thanks Ruxuan Zhang for helpful discussions, 
and Zheng Yuan and Zhihong Wang for their kind help.
This work was supported by NSFC grant 12425105 and the
Shanghai Pilot Program for Basic Research (No.~21TQ001).

\section*{History and AI disclosure}

Our approach to Bloch's conjecture was to decompose complicated
derived autoequivalences into simpler ones whose actions on
zero-cycles are understood. However, carrying out this
decomposition directly in the derived category of a K3 surface
runs into difficulties at low Picard number. In fact,
Example~\ref{ex:rank-two-full} gives an obstruction to such
a decomposition in general. To address these difficulties,
we pursued two approaches over the past two years:
\begin{itemize}
\item[A)] Lift the problem to a higher-dimensional hyper-K\"ahler
variety, for instance a variety of \(K3^{[n]}\)-type or
O'Grady's ten-dimensional exceptional hyper-K\"ahler type.
The advantage of this approach is that the increased rank
of the relevant lattice ensures that the desired decomposition
exists. The difficulty is that the corresponding instances
of Bloch's conjecture are not known for all the reflections
occurring in such a decomposition.

\item[B)] Restrict the decomposition problem to the actions
induced by automorphisms, rather than arbitrary autoequivalences.
These actions are represented by certain diagonal matrices,
so the aim is to decompose only these matrices into reflective
involutions. The difficulty is that we also knew of
counterexamples to the desired decomposition for diagonal
matrices in general.
\end{itemize}

The turning point came from an experiment rather than a planned
step in the proof. While testing Seed Pro 2.1, I asked whether
the reflection group and the diagonal group could have a
nontrivial intersection. The model first proposed a
counterexample already known to us. I then asked whether it could find one under
the additional assumption that the N\'eron--Severi lattice
contains no nonzero vector of square \(0\), \(2\), or \(-2\).
For a projective K3 surface of Picard number two, this condition
is equivalent to \(\Aut(S)\cong\Z\). In response, it drew
my attention to Serre's congruence subgroup theorem.

That suggestion became the starting point of this paper.
Building on it, one can easily use our previous approach in \cite{LYZ} to prove that, when the N\'eron--Severi lattice
has rank two and is anisotropic, the associated reflective group
is a congruence subgroup.
I record this exchange both to acknowledge that contribution
and to preserve an unexpected part of the history of the work.
LLM were also used to refine and improve
the exposition of the manuscript.

\section{Preliminaries}\label{sec:lattices}

\subsection{Reflective involutions and Eichler transformations}

Let $U=\Z e\oplus\Z f$ with $e^2=f^2=0$ and $(e,f)=1$.
For an even nondegenerate lattice $L$, put $q(x)=(x,x)/2$ and
write $L^\vee$ for its
dual lattice and $A_L=L^\vee/L$ for its discriminant group. Set
\[
 \widetilde O(L)=\ker\bigl(O(L)\longrightarrow O(A_L)\bigr).
\]
For a lattice of signature $(2,n)$, let $O^+(L)$ denote the
isometries preserving an orientation of the positive two-planes,
and put $\widetilde O^+(L)=\widetilde O(L)\cap O^+(L)$.
We recall the reflective involutions of \cite[Definition 3.3]{LYZ}.

\begin{definition}\label{def:reflective}
Let $\W(L)$ be the subgroup of $O(L)$ generated by the
following integral isometries:
\begin{enumerate}
\item $s_r(x)=x-2(x,r)r/(r,r)$ for $r^2=2$ or $r^2=-2$;
\item involutions $j$ acting trivially on $A_L$ for which
there is a primitive isotropic $u\in L$ with
$j(u)=-u$ and $j|_{u^\perp/\Z u}=\id$.
\end{enumerate}
\end{definition}

The group $\W(L)$ is normal in $O(L)$. For $L=U\oplus N$,
it contains the integral Eichler transformations $T_e(v)$ and
$T_f(v)$ for $v\in N$; see \cite[Section 6.4]{LYZ}.
We use the convention
\[
 T_e(v)e=e,\qquad
 T_e(v)f=f+v-\frac{v^2}{2}e,\qquad
 T_e(v)n=n-(v,n)e\quad(n\in N),
\]
and define $T_f(v)$ by interchanging $e$ and $f$.

\subsection{Congruence subgroups and reflective decompositions}

For an integral lattice $M$, write
\[
 O(M,m)=\ker\bigl(O(M)\longrightarrow\Aut(M/mM)\bigr),
 \qquad \SO(M,m)=\SO(M)\cap O(M,m).
\]
A subgroup of $O(M)$ is a congruence subgroup if it contains
$O(M,m)$ for some integer $m>0$.
We use $\W^{\pm}(M)$ in the sense of \cite[Section 6.4]{LYZ}:
it is generated by pairs $s_rs_s$ with $r^2=s^2=\pm2$ and
even products of Huybrechts involutions. For $M=U\oplus N$,
these are $JT_e(v)$ and their conjugates by reflections in
vectors of square $\pm2$, where $J=(-\id_U)\oplus\id_N$;
see \cite[Definition~6.6]{LYZ}. These Huybrechts involutions are reflective, so $\W^\pm(M)\subseteq\W(M)$.

The following is a consequence of \cite[Lemma 6.1 and Theorem 6.9]{LYZ}.

\begin{theorem}\label{thm:congruence-reflective}
Let $N$ be an even lattice of signature $(1,\rho-1)$ with
$\rho\geq2$, and put $L=U\oplus N$. If
$\W^{\pm}(L(-1))$ is a congruence subgroup of $\SO(L(-1))$,
then
\[
 \W(L)\cap O^+(L)=\widetilde O^+(L).
\]
\end{theorem}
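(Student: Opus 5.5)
The statement is quoted as a consequence of \cite[Lemma 6.1 and Theorem 6.9]{LYZ}, so the plan is to assemble those two results. The argument first establishes the inclusion $\W(L)\cap O^+(L)\subseteq\widetilde O^+(L)$ directly, and then obtains the reverse inclusion from the congruence hypothesis via local generation.

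\emph{Easy inclusion.} Each generator of $\W(L)$ acts trivially on $A_L$.
\begin{itemize}
\item For a reflection $s_r$ with $r^2=\pm2$ and $x\in L^\vee$, we have $s_r(x)-x=\mp(x,r)r\in L$, since $(x,r)\in\Z$.
\item The involutions of type (2) in Definition~\ref{def:reflective} act trivially on $A_L$ by definition.
\end{itemize}
Hence $\W(L)\subseteq\widetilde O(L)$, and intersecting with $O^+(L)$ gives the inclusion.

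\emph{Passing to $L(-1)$.} For the reverse inclusion, we work with $L(-1)$.
\begin{itemize}
\item $O(L)=O(L(-1))$ as abstract groups.
\item Reflections in roots of square $\pm2$ correspond to each other under the rescaling.
\item Huybrechts involutions of $L$ correspond to those of $L(-1)\cong U\oplus N(-1)$.
\end{itemize}
So $\W^\pm(L(-1))$, as a subgroup of $O(L)$, lies in $\W(L)$.

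The point of the hypothesis is this. $\W^\pm$ consists of products of pairs $s_rs_s$ (spinor norms and orientation characters cancel appropriately) and even products of involutions. A congruence subgroup of $\SO$ containing $\SO(L,m)$ then reduces the problem to a finite, local question: show that the image of $\W(L)\cap O^+(L)$ in $O(L/mL)$, or more precisely in $\prod_{p\mid m}O(L\otimes\Z_p)$ modulo the principal congruence subgroup, equals the image of $\widetilde O^+(L)$. This is what \cite[Theorem 6.9]{LYZ} supplies. It is the local generation theorem: because $L\otimes\Z_p$ contains a hyperbolic plane $U$, the group $\widetilde O(L\otimes\Z_p)$ is generated by Eichler transformations together with reflections and involutions of the types in Definition~\ref{def:reflective}. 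These local generators can be approximated by global elements of $\W(L)$, using the integral Eichler transformations $T_e(v)$ and $T_f(v)$ for $v\in N$.

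\emph{Assembling.} Take $g\in\widetilde O^+(L)$.
\begin{enumerate}
\item By local generation, find $w\in\W(L)$ with $w\equiv g$ modulo $m$ in the sense of \cite[Lemma 6.1]{LYZ}.
\item Arrange that $w^{-1}g$ has spinor norm and determinant compatible with membership in $\SO(L,m)$, adjusting by $-\id$ or by products of reflections if necessary. Also check that $\W(L)$ contains elements realizing every orientation and determinant character allowed on $\widetilde O^+$.
\item Conclude that $w^{-1}g\in\SO(L(-1),m)\subseteq\W^\pm(L(-1))\subseteq\W(L)$, so $g\in\W(L)$.
\end{enumerate}

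The main obstacle is the character bookkeeping in step (2). Congruence is stated inside $\SO$, while $\widetilde O^+$ can contain determinant $-1$ elements, and the spinor/orientation characters must match. I would handle this first by exhibiting explicit reflective elements of $\W(L)\cap O^+(L)$ of determinant $-1$ (for instance $s_rs_{r'}$-type combinations, or $J\,T_e(v)$ composed with a reflection) that are trivial modulo $m$ up to $\SO(L,m)$. Lemma~6.1 of \cite{LYZ} should provide exactly this reduction.
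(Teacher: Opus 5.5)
\textbf{Verdict: genuine gap.} Your easy inclusion and the transfer to $L(-1)$ are correct and agree with the paper. The overall architecture (approximate $g$ modulo $m$ by local generation, then apply the congruence hypothesis) is also what lies behind \cite[Theorem~6.9]{LYZ}. The gap is in step (1), and the missing ingredient is the spinor norm of $g$, not the determinant.

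\textbf{Why step (1) fails as justified.} Every generator of $\W(L)$ has spinor norm $\pm1$:
\begin{itemize}
\item a reflection $s_r$ with $r^2=\pm2$ has $\theta=q(r)=\pm1$;
\item an isotropic reflective involution equals $s_{r_1}s_{r_2}$, where $\langle r_1,r_2\rangle$ is its $-1$ eigenspace, a rational hyperbolic plane, so $\theta=-1$.
\end{itemize}
Since $\theta_p$ is locally constant, the closure of $\W(L)$ in $O(L\otimes\Z_p)$ lies in $\{x:\theta_p(x)\in\{\pm1\}\}$. This is in general a proper subgroup of $\widetilde O(L\otimes\Z_p)$. Take the map $e\mapsto ae$, $f\mapsto a^{-1}f$, $\id$ on $N$. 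It equals $s_{e-f}s_{e-af}$, lies in $\widetilde{\SO}(L\otimes\Z_p)$, and has spinor norm $a$. For $p\equiv1\pmod 4$ and $a$ a nonsquare unit, $a\notin\{\pm1\}\Q_p^{\times2}$. So the local generation statement you write is false as stated. If instead you allow arbitrary local reflections such as $s_{e-af}$, those cannot be approximated by elements of $\W(L)$.

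\textbf{What is missing.} The existence of $w\in\W(L)$ with $w\equiv g\pmod m$ needs a global input: after reducing to $\det g=1$, $g$ has trivial rational spinor norm. This holds because the spinor norm is positive at the real place (orientation) and is a unit modulo squares at every finite place (trivial action on $A_L$). That is exactly what the paper takes from \cite[Lemma~6.1]{LYZ}. You instead attribute to that lemma the congruence $w\equiv g$ and the determinant reduction, which is not its content.

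\textbf{The bookkeeping in step (2) is misplaced.} Once $w\equiv g\pmod m$ with $m\geq3$, you automatically have $w^{-1}g\in O(L,m)\subseteq\SO(L,m)$. The spinor norm plays no role in membership in $\SO(L,m)$. What is needed is to reduce to $\det g=1$ \emph{before} approximating, so that the spinor-norm argument and \cite[Theorem~6.9]{LYZ} apply. A single reflection does this: $s_{e-f}\in\W(L)$ fixes the positive plane $\langle e+f,h\rangle$ (with $h\in N$, $h^2>0$) pointwise, and acts trivially on $A_L$. Your candidates $s_rs_{r'}$ have determinant $+1$, and no element of determinant $-1$ is trivial modulo $m\geq3$.

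\textbf{The correct order, as in the paper:}
\begin{enumerate}
\item Multiply $g$ by $s_{e-f}$ if necessary, so that $\det g=1$.
\item Deduce that $g$ has trivial rational spinor norm on $L(-1)$.
\item Apply \cite[Theorem~6.9]{LYZ}: its local generation part, together with $\theta(g)=1$, lets you approximate $g$ modulo $m$ by an element of $\W^\pm(L(-1))$.
\item The congruence hypothesis then gives $g\in\W^\pm(L(-1))\subseteq\W(L)$.
\end{enumerate}
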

\begin{proof}
Every generator of $\W(L)$ acts trivially on $A_L$.
Conversely, let $g\in\widetilde O^+(L)$. Multiplying by
$s_{e-f}$ if necessary, we may assume that $\det g=1$.
By \cite[Lemma~6.1]{LYZ}, $g$ has trivial rational spinor norm
on $L(-1)$. The congruence hypothesis and
\cite[Theorem~6.9(1') and (3')]{LYZ} therefore give
$g\in\W^\pm(L(-1))\subseteq\W(L)$.
Here changing the sign of the form preserves reflective involutions.
\end{proof}

The congruence hypothesis holds when $\rho\geq3$
\cite[Corollary 6.11]{LYZ}, but the resulting equality can fail
in Picard number two, as the following example shows.

\begin{example}\label{ex:rank-two-full}
Let $S$ be a projective complex K3 surface with
$\NS(S)\simeq U(9)$, and put $L=U\oplus U(9)$.
In an isotropic basis $(e,f,a,b)$ with $(e,f)=1$ and $(a,b)=9$, the isometry
\[
 h=\begin{pmatrix}
 -8&0&0&-27\\
 0&10&-27&0\\
 0&3&-8&0\\
 3&0&0&10
 \end{pmatrix}
\]
belongs to $\widetilde O^+(L)$ but not to $\W(L)$.
The first assertion follows by direct verification. For the second,
Appendix~\ref{sec:full-obstruction} constructs a homomorphism
$\Psi:H\to\Z$, where $H$ contains $\W(L)$ and $h$, with
\[
 \Psi|_{\W(L)}=0,\qquad\Psi(h)=-3.
\]
Thus $\W(L)\cap O^+(L)$ has infinite index in
$\widetilde O^+(L)$. Here $N=U(9)$ is isotropic, so this
example lies outside Theorem~\ref{thm:anisotropic-congruence}.
\end{example}

For anisotropic $N$ of rank two, Proposition~\ref{prop:binary-congruence}
establishes the congruence hypothesis directly.

\subsection{K3 surfaces of Picard number two}\label{sec:rank-two}

Let $S$ be a smooth projective complex K3 surface with $\rho(S)=2$,
and put $N=\NS(S)$. The following criterion is
\cite[Lemma 7.6]{LYZ}.

\begin{proposition}\label{prop:cyclic}
The group $\Aut(S)$ is infinite cyclic if and only if $N$ contains
no nonzero vector of square $0$, $2$, or $-2$.
\end{proposition}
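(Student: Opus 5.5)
This criterion is \cite[Lemma~7.6]{LYZ}. Here is how I would reprove it directly from the global Torelli theorem and the standard description of automorphisms in Picard number two. Since $\rho(S)=2$, the transcendental lattice $T(S)$ has rank $20$. Any $g\in\Aut(S)$ acts on $T(S)$ through a finite cyclic group, and for odd rank $T(S)$ only $\pm\id$ is possible. So the kernel of $\Aut(S)\to O(T(S))$ has index at most two. The strategy is to identify $\Aut(S)$, up to finite groups, with the subgroup $\Gamma\subset O^+(N)$ of isometries that preserve the ample cone and act on $A_N$ by $\pm\id$ (compatibly with the gluing to $T(S)$). Then one analyses $\Gamma$ in the three cases of a vector of square $0$, of square $-2$, or of square $2$.

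For the direction ``no vectors of square $0,\pm2$ $\Rightarrow$ $\Aut(S)\cong\Z$'', the steps are as follows.
\begin{itemize}
\item With no $(-2)$-classes, the ample cone is an entire component of the positive cone.
\item Anisotropy means the boundary rays of the positive cone are irrational. Hence $\SO^+(N)$ is infinite cyclic, by Dirichlet's unit theorem for the associated real quadratic order, and the finite-index subgroup acting trivially on $A_N$ is also infinite cyclic.
\item By Torelli, an isometry of $H^2(S,\Z)$ that is the identity on $T(S)$, trivial on $A_N$, and preserves the ample cone comes from an automorphism. So $\Aut(S)$ is infinite.
\item Torsion must be excluded. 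A nontrivial finite-order automorphism either acts on $N$ with finite order, or is anti-symplectic and acts on $T(S)$ by $-\id$. An involution $\iota$ with $\iota^*|_N=\id$ and $\iota^*|_T=-\id$ would force $A_N\cong A_T$ to be $2$-elementary with compatible sign. More to the point, the fixed lattice of any finite-order isometry of $N$ preserving the positive cone contains a positive vector. Using the absence of square-$2$ vectors, I would argue that the relevant involutions of $N$ are reflections. Such a reflection fixes a positive class $h$, and its orthogonal vector $r$ then spans a negative-definite rank-one lattice. Gluing forces $r^2=-2$ (to be non-effective in the chamber), or $h^2=2$ via the double-plane construction. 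Both are excluded.
\item A torsion-free group containing $\Z$ with index at most $2$ is $\Z$.
\end{itemize}

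For the converse direction there are three cases.
\begin{itemize}
\item If $N$ has a vector of square $0$, then $S$ is elliptic. In Picard number two, either $\Aut(S)$ is finite or it is the Mordell--Weil-type group.
\item If $N$ has a $(-2)$-vector, the ample cone is bounded by a $(-2)$-wall. The stabiliser is then finite or infinite dihedral-like, and not infinite cyclic.
\item If $N$ has a square-$2$ vector $h$, then (after handling the nef issue) $h$ or its image gives a double cover $S\to\mathbb P^2$. Its covering involution, composed with an infinite-order element when one exists, gives a non-cyclic group, or else the group is finite.
\end{itemize}
In each case $\Aut(S)$ is finite, or contains torsion, and so is not $\cong\Z$.

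The main obstacle I expect is the careful bookkeeping of torsion. This includes the anti-symplectic involutions acting as $-\id$ on $T(S)$, and the case of a square-$2$ vector that is not nef. Rather than redo it, I would ultimately cite \cite[Lemma~7.6]{LYZ}, where this analysis is carried out.
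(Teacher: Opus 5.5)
Your final appeal to \cite[Lemma~7.6]{LYZ} is exactly the paper's route: the paper gives no argument of its own for this statement and simply quotes that lemma. The direct reproof you sketch, however, has a genuine gap in the torsion step.

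\textbf{Reduction to $\pm\id$ on $T(S)$.} This reduction rests on the odd-rank fact, but $T(S)$ has rank $20$, which is even. A priori the image of $\Aut(S)$ in $O(T(S))$ is cyclic of some order $n$ with $\varphi(n)\mid 20$. Orders $n>2$ really occur in Picard number two: the surface $y^2=x^3+t(t^{11}-1)$ has $\NS\simeq U$ and an automorphism acting on $T(S)$ with order $66$. Finite index of the symplectic kernel is automatic, and that suffices for your last bullet, since a torsion-free virtually cyclic group is cyclic. But finite-order automorphisms with $g^2\neq\id$ are never excluded.

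One way to close this is as follows. A nontrivial finite-order element of $O^+(N)$ is an involution, so $g^2$ acts trivially on $N$, hence on $A_N\simeq A_{T(S)}$. If $g^2\neq\id$, it acts on $T(S)$ through a primitive $m$-th root of unity with $m>1$. Then $T(S)$, and hence $N$, is
\begin{itemize}
\item unimodular when $m$ is not a prime power, since $1-\zeta_m$ is a unit;
\item $p$-elementary when $m=p^k$, since $(1-\zeta_{p^k})^{\varphi(p^k)}\in p\,\Z[\zeta_{p^k}]^\times$.
\end{itemize}
Since $\varphi(p^k)\mid 20$, a short check of even binary lattices shows that $N$ is then isotropic or contains a vector of square $2$.

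\textbf{The involution case.} Here your dichotomy is essentially right, but effectivity and the double-plane construction play no role. The mechanism is that both eigenlattices of an involution of the unimodular lattice $H^2(S,\Z)$ have $2$-elementary discriminant groups.
\begin{itemize}
\item If $g^*|_{T(S)}=\id$, the anti-invariant lattice is $\Z r$, forcing $r^2=-2$. This is also what excludes finite-order symplectic automorphisms, which your sketch does not address.
\item If $g^*|_{T(S)}=-\id$, the invariant lattice is $\Z h$ or all of $N$. The first forces $h^2=2$. In the second case, which you leave unfinished, $N$ is $U$, $U(2)$ or $\langle 2\rangle\oplus\langle -2\rangle$, all of which are isotropic.
\end{itemize}

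\textbf{The converse direction.} Your conclusion is right, but the case descriptions are off; in fact $\Aut(S)$ is finite in both cases.
\begin{itemize}
\item If $N$ has an isotropic vector, both isotropic rays are rational, so $O(N)$ is finite.
\item If $N$ has a $(-2)$-vector, some boundary ray of the nef cone is a $(-2)$-wall lying inside the positive cone, so the nef cone is preserved by no infinite-order element of $O^+(N)$.
\end{itemize}
The dihedral group you have in mind is the Weyl group inside $O^+(N)$, not $\Aut(S)$.
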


This identifies the remaining automorphism case in the introduction.
For the proof of Theorem~\ref{thm:main}, we use the following
consequence of \cite[Proposition~3.2]{HKL}.

\begin{proposition}\label{prop:symplectic-action}
Let $g$ be an infinite-order symplectic automorphism of $S$, and
put $A=g^*|_N$. Then $N$ contains no nonzero isotropic vector or
$(-2)$-vector, and
\[
 A\in\SO^+(N)\setminus\{\id\},\qquad
 (A-\id)N^\vee\subset N.
\]
Here $\SO^+(N)$ denotes the determinant-one isometries preserving
a chosen component of the positive cone.
\end{proposition}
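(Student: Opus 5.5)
The plan is to read off each assertion from the action of $g^*$ on the unimodular lattice $H^2(S,\Z)=N\oplus_{\text{glue}}T(S)$, combined with the Torelli theorem and the geometry of the positive cone in rank two. This is essentially the content of \cite[Proposition~3.2]{HKL}, and I would reprove it along these lines.

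\emph{Discriminant condition.} Since $g$ is symplectic, $g^*|_{T(S)}=\id$, so $g^*$ acts trivially on $A_{T(S)}$. Because $H^2(S,\Z)$ is unimodular, the gluing gives an isomorphism $A_N\cong A_{T(S)}$ compatible with $g^*$. Hence $A$ acts trivially on $A_N=N^\vee/N$, which is exactly $(A-\id)N^\vee\subset N$.

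\emph{The group $\SO^+(N)$ and $A\neq\id$.} Since $g^*$ maps ample classes to ample classes, $A$ preserves the positive component. By the Torelli theorem, $g^*$ determines $g$. If $A$ had finite order, then $g^*$ would have finite order on $N\oplus T(S)$, hence on $H^2(S,\Z)$, so $g$ would have finite order, which is a contradiction. In particular $A\neq\id$. Now suppose $\det A=-1$. On a hyperbolic plane, an isometry of determinant $-1$ preserving the positive component fixes a positive vector. It is therefore a reflection and has order two, again a contradiction. Hence $A\in\SO^+(N)\setminus\{\id\}$ and $A$ has infinite order. Over $\R$, $A$ is then a hyperbolic boost with eigenvalues $\lambda^{\pm1}$, $|\lambda|\neq1$, and the eigenlines are the two isotropic boundary rays of the positive cone.

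\emph{Anisotropy.} Suppose $N$ contains a nonzero isotropic vector. Then it contains a primitive integral isotropic $u$ spanning one boundary ray, and $A$ preserves this ray. So $Au=\lambda u$ with $Au$ integral and primitive, which forces $\lambda=\pm1$. This contradicts infinite order.

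\emph{No $(-2)$-vectors.} If $\delta\in N$ has $\delta^2=-2$, Riemann--Roch makes $\pm\delta$ effective, so $S$ contains a smooth rational curve $C$. The nef cone is then a proper closed subcone of the closed positive cone, with a boundary ray on $C^\perp$ lying in the interior of the positive cone. This subcone is $A$-invariant. However, for a hyperbolic boost the iterates $A^{\pm n}$ push every interior ray toward the two isotropic rays. So the only closed $A$-invariant subcone with nonempty interior is the whole closed positive cone, which is a contradiction.

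I expect the last step to be the main point requiring care. Specifically, one must justify that a $(-2)$-curve produces a wall strictly inside the positive cone, and that the dynamics of an infinite-order element of $\SO^+$ of a hyperbolic plane rule out such invariant walls. The remaining steps are formal consequences of unimodularity and Torelli.
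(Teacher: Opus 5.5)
Your argument is correct. It differs from the paper only in how the lattice-theoretic assertions are obtained. The paper does not reprove them: it cites \cite[Proposition~3.2]{HKL} for the facts that $N$ has no nonzero isotropic or $(-2)$-vector and that $A\in\SO^+(N)$. It then gives exactly your unimodularity and gluing argument for $(A-\id)N^\vee\subset N$.

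You instead derive the cited part directly:
\begin{enumerate}
\item injectivity of $\Aut(S)\to O(H^2(S,\Z))$ forces $A$ to have infinite order;
\item the structure of $O^+(1,1)$ gives $\det A=1$, since determinant $-1$ elements preserving the positive component are involutions;
\item the boost structure gives anisotropy, via integrality and primitivity of a rational isotropic eigenvector;
\item the contraction dynamics of $A^{\pm n}$ rule out a proper $A$-invariant nef cone, hence rule out $(-2)$-curves.
\end{enumerate}
Your route buys a self-contained proof that makes visible why infinite order forces these lattice conditions. The paper's citation keeps the section short.

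On the step you flagged: you do not need the nef cone to have a boundary ray exactly on $C^\perp$. Your dynamics argument only uses that $\operatorname{Nef}(S)$ is a closed convex $A$-invariant cone with nonempty interior that is properly contained in the closed positive cone. Properness is immediate. $C^\perp$ is spanned by a positive vector $p$, which you may take in the chosen component. Then for small $\epsilon>0$ the class $p+\epsilon C$ still lies in the positive cone, but $(p+\epsilon C,C)=-2\epsilon<0$, so it is not nef.
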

\begin{proof}
The assertions concerning $N$ and $A\in\SO^+(N)$ are
\cite[Proposition 3.2]{HKL}. Since $g^*|_{T(S)}=\id$, the
unimodularity of $H^2(S,\Z)$ implies that $A$ acts trivially
on $A_N$, which is the remaining assertion.
\end{proof}

\subsection{Normal subgroup and congruence subgroup theorems}

We use the following form of Margulis's normal subgroup theorem;
see \cite{Margulis}.

\begin{theorem}\label{thm:margulis}
Let $\Gamma$ be an irreducible lattice in
$\SL_2(\R)\times\SL_2(\R)$. Every infinite normal subgroup
of $\Gamma$ has finite index.
\end{theorem}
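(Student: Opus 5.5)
We do not reprove this; it is the case of Margulis's theorem \cite{Margulis} for the real rank two group $G=\SL_2(\R)\times\SL_2(\R)$. If one wanted a self-contained argument, we would follow the now-standard two-halves strategy. Let $M\trianglelefteq\Gamma$ be an infinite normal subgroup and put $Q=\Gamma/M$. The aim is to show that $Q$ is both amenable and has property (T). A discrete group with both properties is finite, which is exactly the claim.

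\emph{Amenability half.} Let $P=B\times B\subset G$, with $B$ the upper triangular Borel subgroup of $\SL_2(\R)$, so that $G/P=\mathbb P^1(\R)\times\mathbb P^1(\R)$. By the Furstenberg argument there is a $\Gamma$-equivariant measurable boundary map from $G/P$ to probability measures on any compact metric $Q$-space. Hence amenability of $Q$ reduces to Margulis's factor theorem: every measurable $\Gamma$-equivariant quotient of $G/P$ is, up to null sets, of the form $G/P'$ for a closed subgroup $P'\supseteq P$. For the product $G$, the only such subgroups are $P$, $B\times\SL_2(\R)$, $\SL_2(\R)\times B$ and $G$.

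Irreducibility of $\Gamma$ enters here: each factor $\SL_2(\R)$ acts ergodically on $G/\Gamma$, by Moore's theorem applied to the projections, which are dense. We would then apply the factor theorem to the $M$-invariant $\sigma$-algebra on $G/P$. Because $M$ is normal and infinite, its projection to each simple factor is nondiscrete, hence dense. This forces the $M$-invariant functions on $G/P$ to be $G$-invariant, i.e.\ constant. The upshot is that $Q$ acts amenably on a point, so $Q$ is amenable.

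\emph{Property (T) half.} This is the step I expect to be the main obstacle. The group $\SL_2(\R)$ lacks property (T), so Kazhdan's argument does not apply. I would instead use Shalom's (or Bader--Shalom's) theorem for irreducible lattices in products of two factors. Let $\pi$ be a unitary representation of $Q$, viewed as a representation of $\Gamma$ with $M$ acting trivially. Using the first cohomology and induction to $G$, one shows that $\pi$ either has almost invariant vectors only when it has invariant ones, or else the induced representation contains a subrepresentation on which one factor acts trivially. In the second case, the density of the projection of $M$ to that factor, together with $M$ acting trivially, produces genuinely $\Gamma$-invariant vectors. Therefore $Q$ has property (T).

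Combining the two halves, $Q$ is amenable with property (T), so $Q$ is finite and $M$ has finite index in $\Gamma$. The only place where irreducibility and the hypothesis that $M$ is infinite are used is the density of the projections of $M$. An infinite normal subgroup cannot lie in the finite center $\{\pm1\}^2\cap\Gamma$, and by Borel density it then projects densely to each factor. I would verify this first.
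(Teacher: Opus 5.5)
Your proposal is correct and matches the paper, which also just quotes Margulis's normal subgroup theorem without proof; here $G$ has real rank two and finite center, so an infinite normal subgroup is noncentral and hence of finite index. The amenability/(T) outline is an optional aside. If you keep it, note that the density of the projections of $M$ comes from irreducibility rather than from Borel density: the closure of each projection is normalized by the dense projection of $\Gamma$, and $\Gamma$ meets each factor only in a finite group.
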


We also use Serre's congruence subgroup theorem for real
quadratic fields \cite[p.~499, Corollary~3]{SerreCSP}.

\begin{theorem}\label{thm:serre}
Let $F$ be a real quadratic field with ring of integers $\OO_F$.
Every finite-index subgroup of $\SL_2(\OO_F)$ contains
\[
 \ker\bigl(\SL_2(\OO_F)\longrightarrow
              \SL_2(\OO_F/q\OO_F)\bigr)
\]
for some integer $q>0$.
\end{theorem}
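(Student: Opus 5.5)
Since Theorem~\ref{thm:serre} is a cited input (Serre's solution of the congruence subgroup problem for $\SL_2$ over rings of $S$-integers with infinitely many units), my plan is to reconstruct Serre's argument rather than find a new one. The basic mechanism is this: because $F$ is real quadratic, $\OO_F^\times$ has rank one, so the diagonal torus of $\SL_2(\OO_F)$ is infinite and acts on the unipotent radicals by scaling with squares of units. I first reduce to a finite-index \emph{normal} subgroup $\Delta\subseteq\SL_2(\OO_F)$, by passing to the normal core. Let $E(q)$ denote the normal subgroup generated by the elementary matrices $\begin{pmatrix}1&x\\0&1\end{pmatrix}$ and $\begin{pmatrix}1&0\\x&1\end{pmatrix}$ with $x\in q\OO_F$. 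The proof then has two steps: (a) $\Delta\supseteq E(q)$ for some $q$; (b) $E(q)$ contains the full principal congruence subgroup of some level $q'$.

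For step (a), set $I=\{x\in\OO_F : \begin{pmatrix}1&x\\0&1\end{pmatrix}\in\Delta\}$. This set is an additive subgroup of finite index, because $\Delta$ has finite index and the unipotent group is isomorphic to $\OO_F\cong\Z^2$. Conjugation by $\operatorname{diag}(u,u^{-1})$ shows that $I$ is stable under multiplication by $u^2$ for every unit $u$. Let $u$ be a fundamental unit. Then $u^2\notin\Q$, so $\Z[u^2]$ is an order of finite index in $\OO_F$. Hence $I$ is a module over a finite-index order, and it therefore contains an ideal $q\OO_F$ for a suitable integer $q>0$. The same argument applies to the lower unipotents, and normality then gives $E(q)\subseteq\Delta$. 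This step is elementary.

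Step (b) is the main obstacle. It amounts to showing that the quotient $\SL_2(\OO_F,q')/E(q')$ is trivial for a suitable multiple $q'$ of $q$, i.e.\ that the congruence kernel is trivial. I would follow Bass--Milnor--Serre. Any $\begin{pmatrix}a&b\\c&d\end{pmatrix}\equiv\id\bmod q$ reduces modulo $E(q)$ to a class depending only on $(b,a)$, namely a Mennicke symbol $\left[\begin{smallmatrix}b\\a\end{smallmatrix}\right]$. One checks the Mennicke relations: bimultiplicativity, and invariance under $a\mapsto a+tb$ for $t\in q$ and $b\mapsto b+ta$ for $t\in\OO_F$. Using Dirichlet density of primes in ray classes (Chebotarev), one then shows that the symbol factors through a power-residue symbol with values in the roots of unity $\mu(F)=\{\pm1\}$, and hence takes values in a group of order at most $2$. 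Choosing the level $q'=4q$ (or $q^2$) makes the symbol trivial, since its values are squares. This is where the real-embedding hypothesis enters, in the form ``$F$ is not totally imaginary.'' I expect the density argument that relates Mennicke symbols to reciprocity to be the delicate step. If it resists reconstruction, I would simply cite \cite[p.~499, Corollary~3]{SerreCSP}: an arithmetic ring with infinitely many units has trivial congruence kernel for $\SL_2$. That gives exactly the stated containment of $\ker\bigl(\SL_2(\OO_F)\to\SL_2(\OO_F/q'\OO_F)\bigr)$ in $\Delta$.
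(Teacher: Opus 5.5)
The paper does not prove this statement. It quotes it from Serre \cite[p.~499, Corollary~3]{SerreCSP}, so your fallback citation is exactly the paper's treatment. Your paraphrase of that result is too strong, though. For $\SL_2$ over $S$-integers with infinitely many units, Serre's congruence kernel is $\mu(k)\neq1$ when $k$ is totally imaginary, and it is trivial only when $k$ has a real place. For real quadratic $F$ this is harmless. Your reconstruction, however, has a genuine gap in step (b).

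\textbf{The units are used in the wrong place.} Step (a) needs no units: if $[\OO_F:I]=n$, then $n\OO_F\subseteq I$ by Lagrange, with no appeal to $\Z[u^2]$. The units are indispensable in the claim that the class of a matrix modulo $E(q)$ depends only on $(b,a)$ and satisfies the Mennicke relations. For $\SL_n$ with $n\geq3$, Bass--Milnor--Serre get this from the extra room in the group. For $\SL_2$ it is not a routine check, as $\Z$ shows. There step (a) still holds and nothing in your step (b) uses units. Yet $E(q)=\langle\langle T^q\rangle\rangle$ has infinite index in $\SL_2(\Z)$ for $q\geq 6$, because $\mathrm{PSL}_2(\Z)/\langle\langle T^q\rangle\rangle$ is the infinite $(2,3,q)$ triangle group. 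If the class modulo $E(q)$ were a Mennicke symbol, the triviality of Mennicke symbols over $\Q$ would force $\SL_2(\Z,q)=E(q)$. So ``one checks the Mennicke relations'' conceals exactly the content of Serre's theorem: establishing these relations for $\SL_2$ using an infinite unit group. Your proposal does not supply this.

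\textbf{The role of the real place is misidentified.} Deepening the level does not kill Mennicke symbols; it does the opposite. For totally imaginary $k$, the universal Mennicke group at level $\mathfrak q$ is $\mu_{r(\mathfrak q)}$ in the Bass--Milnor--Serre classification. Here $r(\mathfrak q)$ grows with the divisibility of $\mathfrak q$ until it reaches $|\mu(k)|$. Your argument, run at level $|\mu(k)|\,q$, would therefore prove the congruence kernel trivial in the totally imaginary case, contradicting Serre. There is also no evident reason why symbols of level $4q$ should be squares, since multiplicativity applies only to factors lying in $q\OO_F$.

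The correct mechanism runs through Hilbert reciprocity. The only candidate symbol is the quadratic residue symbol $(b/a)_2$, and reciprocity governs its invariance under $a\mapsto a+tb$ with $t\in q\OO_F$. Taking $q$ highly divisible by $2$ makes the dyadic Hilbert symbols trivial, because $a\equiv1$ becomes a local square. The contributions at primes dividing $b$ are unchanged. But the real Hilbert symbols $(a,b)_\infty$ depend on the signs of $a$ and $b$, and such substitutions can change those signs. Hence $(b/a)_2$ is never a Mennicke symbol when $F$ has a real place, and the Mennicke group is trivial at every level, not just at deep ones.
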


\section{A congruence subgroup theorem in signature \texorpdfstring{$(2,2)$}{(2,2)}}\label{sec:congruence}

\subsection{Key result}

We now establish the congruence property in the remaining
case of an anisotropic lattice of rank two.

\begin{proposition}\label{prop:binary-congruence}
Let $N$ be an anisotropic even lattice of signature $(1,1)$,
and put $L=U\oplus N$. Then $\W^\pm(L)$ is a congruence
subgroup of $\SO(L)$. In particular, $\W(L)$ is a congruence
subgroup of $O(L)$.
\end{proposition}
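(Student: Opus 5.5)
We will follow the three-step outline from the introduction. \emph{Step 1 (the spin group).} Since $N$ is anisotropic of signature $(1,1)$, its discriminant $-d=\det N$ satisfies that $d>0$ is not a square. Hence $N\otimes\Q$ is, up to scaling, the norm form of the real quadratic field $K=\Q(\sqrt d)$. The rational quadratic space $V=L\otimes\Q=U_\Q\oplus N_\Q$ has signature $(2,2)$ and discriminant $d$ modulo squares. It is therefore isometric, up to scaling, to the space of $2\times2$ matrices $X$ over $K$ with $\bar X^{t}=X$ (Galois-hermitian), equipped with the determinant form. On this space $\SL_2(K)$ acts by $X\mapsto gX\bar g^{t}$, which gives $\operatorname{Spin}(V)\cong\operatorname{Res}_{K/\Q}\SL_2$. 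Under this identification the stabilizer of $L$ in $\operatorname{Spin}(V)(\Q)$ is commensurable with $\SL_2(\OO_K)$. We write $\Gamma$ for the inverse image of $\SO(L)$ in the spin group, intersected with an appropriate principal congruence subgroup so that it sits inside $\SL_2(\OO_K)$ up to finite index. Then $\Gamma$ is an irreducible lattice in $\SL_2(\R)^2$, by the standard fact for Hilbert modular groups.

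\emph{Step 2 (finite index and congruence in spin).} Let $\widetilde{\W}$ be the inverse image of $\W^\pm(L)\cap\SO(L)$ in $\Gamma$. Because $\W(L)$ is normal in $O(L)$, $\widetilde{\W}$ is normal in $\Gamma$. We show it is infinite by exhibiting the unipotent Eichler transformations. Products $T_e(v)=(JT_e(0))^{-1}\cdot JT_e(v)$ are even products of Huybrechts involutions, so $T_e(v)\in\W^\pm(L)$ for all $v\in N$, and these have infinite order. (Only one Eichler transformation needs to lift to an infinite-order element, and unipotents lift to unipotents.) Theorem~\ref{thm:margulis} then shows that $\widetilde{\W}$ has finite index in $\Gamma$, hence finite index in $\SL_2(\OO_K)$ after intersecting. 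By Theorem~\ref{thm:serre}, $\widetilde{\W}$ contains a principal congruence subgroup $\Gamma(q)$ of $\SL_2(\OO_K)$. Pushing this down to $\SO(L)$, we conclude that $\W^\pm(L)$ contains the image of $\Gamma(q)$. That image contains $\SO(L,m)\cap\operatorname{im}(\text{spin})$ for suitable $m$, namely the elements of $\SO(L,m)$ with trivial spinor norm.

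\emph{Step 3 (spinor norms).} An element of $\SO(L,m)$ need not lift, because its spinor norm in $\Q^\times/\Q^{\times2}$ may be nontrivial. We will show that $\W^\pm(L)\cap\SO(L,m)$ realizes every spinor-norm class that occurs in $\SO(L,m')$ for some $m'\mid m$ refined appropriately. Concretely, elements $s_rs_s$ with $r^2=s^2=\pm2$, as well as conjugates of Huybrechts pairs, have spinor norm $\pm1$ up to squares. The image of $\SO(L,m)$ under the spinor norm lands in a finite set of classes, namely units and primes dividing $2\det L$, constrained by the congruence condition. To correct local signs, we use strong approximation for vectors of fixed norm in $L$, which is available since $\operatorname{rk}L=4$ and $L$ contains $U$. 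This lets us choose $r,s$ of norm $\pm2$ with $r\equiv s\pmod m$, so that $s_rs_s\in\SO(L,m)\cap\W^\pm(L)$ has the prescribed spinor norm $-1$. Since the spinor norm on $\SO(L,m)$ then only takes values in $\{\pm1\}$ modulo squares, we obtain $\SO(L,m)\subset\W^\pm(L)$ after possibly enlarging $m$. The final claim about $\W(L)\supseteq\W^\pm(L)$ being congruence in $O(L)$ follows because $\SO(L)$ has index at most $2$ in $O(L)$ and $O(L,2m)\cap\SO(L)$ has finite index in $O(L,2m)$. Intersecting with $\SO$ suffices only if $O(L,m)\subset\SO(L)$ for $m\ge3$, which holds because determinant $-1$ elements reduce nontrivially; for $m=2$ we enlarge to $m=4$.

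We expect Step 3 to be the main obstacle. Controlling the spinor norm image of $\SO(L,m)$ and producing elements of $\W^\pm(L)\cap\SO(L,m)$ with each needed spinor norm requires a careful local computation at the primes dividing $2\det N$. Here we would closely imitate \cite[Theorem~6.9]{LYZ}.
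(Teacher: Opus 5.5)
\textbf{Verdict: there is a genuine gap in your argument, at the end of Step~2 rather than in Step~3.}

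You claim that the image of $\Gamma(q)$ under the spin covering $\pi$ contains every element of $\SO(L,m)$ with trivial spinor norm. This is false in general. The kernel of $\pi$ is $\{\pm1\}$. Knowing $g\in\SO(L,m)$ only tells you that, at each prime $p\mid q$, the $p$-adic component of a lift $t$ lies in $\varepsilon_p$ times a small open subgroup, for some sign $\varepsilon_p$. These signs are independent from prime to prime, whereas replacing $t$ by $-t$ flips all of them at once.

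Concretely, suppose $q$ has two distinct odd prime factors $p_1,p_2$. Take $t\in\SL_2(\OO_K)$ with $t\equiv1$ modulo a high power of $p_1$, $t\equiv-1$ modulo a high power of $p_2$, and $t\equiv1$ at the remaining primes of $m$. Then $\pi(t)\in\SO(L,m)$ and it has trivial spinor norm. But neither of its two lifts $\pm t$ lies in $\Gamma(q)$. So the image of a principal congruence subgroup of the spin group need not be congruence in $\SO(L)$, and Margulis plus Serre do not finish the proof.

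\textbf{Step~3 addresses a different, and harmless, issue.}
\begin{itemize}
\item The spinor norm is not an obstacle. $\SO(L)$ is finitely generated, so its spinor norm image is finite. Each nontrivial class is a nonsquare at some finite place, and local spinor norms are locally constant. Hence for $m$ large every element of $\SO(L,m)$ already lifts.
\item Your proposed correction cannot work as stated. For $r^2=s^2=\pm2$, the spinor norm of $s_rs_s$ is $q(r)q(s)=1$, never $-1$.
\item The closing inference of Step~3, from spinor norms in $\{\pm1\}$ to $\SO(L,m)\subset\W^\pm(L)$, does not follow.
\end{itemize}

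\textbf{What is actually needed.} Write the congruence subgroup inside your $\widetilde{\W}$ as $\{t\in\Gamma: t_p\in C_p \text{ for } p\in T\}$. You need elements of $\pi^{-1}(\W^\pm(L))$ realizing an arbitrary local sign pattern $(\varepsilon_p)_{p\in T}$. The paper obtains them from strong approximation on the quadric $q(v)=-1$:
\begin{itemize}
\item Set $u=e-f$, and choose $v\in L$ with $q(v)=-1$ that is $p$-adically close to $\varepsilon_pu$ for every $p\in T$.
\item Put $c=-uv$ in the Clifford algebra. Then $\pi(c)=s_us_v\in\W^\pm(L)$, and $c_p\in\varepsilon_pC_p$ because $-u\cdot\varepsilon_pu=\varepsilon_p$.
\item If $t$ is a lift of $g\in\SO(L,m)$ with $t_p\in\varepsilon_pC_p$, then $tc^{-1}$ lies in that congruence subgroup, so $g\in\W^\pm(L)$.
\end{itemize}

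\textbf{A smaller point.} Normality of $\W(L)$ in $O(L)$ does not by itself give normality of the preimage of $\W^\pm(L)$. The paper avoids this by applying Margulis to the normal subgroup generated by lifts of all products $s_us_v$ with $u^2=v^2=-2$. That generating set is stable under conjugation.
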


\begin{proof}
Set $V=L\otimes\Q$ and $D=-\det N$. Since $N$ is anisotropic,
$D>0$ is not a square. As $V$ contains a hyperbolic plane,
\[
 G:=\operatorname{Spin}(V)
   \simeq\operatorname{Res}_{K/\Q}\SL_2,
 \qquad K=\Q(\sqrt D).
\]
Let $\pi:G\to\SO(V)$ be the spin covering, and put
\[
 \Gamma=\pi^{-1}(\SO(L))\cap G(\Q),\qquad
 \widehat\W=\pi^{-1}(\W^\pm(L))\cap G(\Q).
\]
Since $K$ is a real quadratic field, $\Gamma$ is an irreducible
arithmetic lattice in $\SL_2(\R)^2$. The group $\widehat\W$
contains the normal subgroup of $\Gamma$ generated by lifts
of all products $s_us_v$ with $u^2=v^2=-2$; conjugation
by $\Gamma$ permutes these products and their lifts. This subgroup
is infinite: for $h\in N$ with $q(h)>0$, take
\[
 u=e-f,\qquad v=e-(1+q(h))f+h;
 \qquad (u,v)=-2-q(h)<-2.
\]
Then $s_us_v$ has infinite order.
Theorem~\ref{thm:margulis} gives $[\Gamma:\widehat\W]<\infty$.
Since $\Gamma$ is commensurable with $\SL_2(\OO_K)$,
Theorem~\ref{thm:serre} implies that $\widehat\W$ is congruence.

It remains to show that $\W^\pm(L)$ contains a principal
congruence subgroup of $\SO(L)$.
Since $\widehat\W$ is congruence, choose a finite set of
primes $T$ and open subgroups $C_p\subset G(\Q_p)$ such that
\[
 \{t\in\Gamma:t_p\in C_p\text{ for all }p\in T\}
 \subseteq\widehat\W.
\]
Since $\SO(L)$ is finitely generated, its rational spinor
norm has finite image,
and each nontrivial value is detected at a finite place.
By local continuity, we may therefore choose $m$ such that
every $g\in\SO(L,m)$ admits a rational lift $t\in\Gamma$.
Since $\pi(C_p)$ is an open neighbourhood of the identity
in $\SO(V)(\Q_p)$ and $\pi^{-1}(\pi(C_p))=\{\pm1\}C_p$,
increasing $m$ further ensures that
\[
 t_p\in\varepsilon_p C_p,\qquad
 \varepsilon_p\in\{\pm1\},\quad p\in T.
\]
It suffices to find $c\in\widehat\W$ with
$c_p\in\varepsilon_p C_p$ for every $p\in T$:
then $tc^{-1}\in\widehat\W$ by the choice of the $C_p$,
and hence $g=\pi(t)\in\W^\pm(L)$.

To construct $c$, put $u=e-f$, so that $q(u)=-1$.
The quadric $q(v)=-1$ has the rational point $u$, and
$\dim V=4$ with $V_\R$ isotropic. Strong approximation
away from the real place \cite[Theorem~1.2]{CHZ}, with
integrality imposed outside $T$, therefore gives $v\in L$
sufficiently close to $\varepsilon_pu$ for every $p\in T$.
In the Clifford algebra, set $c=-uv$. Then
\[
 \pi(c)=s_us_v\in\W^\pm(L),\qquad c\in\widehat\W.
\]
Since $-u(\varepsilon_pu)=\varepsilon_p$, choosing $v$
sufficiently close ensures that
$c_p\in\varepsilon_p C_p$ for all $p\in T$, as required.
Thus $\SO(L,m)\subseteq\W^\pm(L)$.
Taking $3\mid m$ also gives
$O(L,m)\subseteq\SO(L,m)\subseteq\W(L)$.
\end{proof}

Theorem~\ref{thm:anisotropic-congruence} follows from
Proposition~\ref{prop:binary-congruence} when $\rho=2$.
For $\rho\geq3$, \cite[Corollary~6.11]{LYZ} identifies a
subgroup of $\W(U\oplus N)$ with $\widetilde O(U\oplus N)$,
which contains a principal congruence subgroup.

\subsection{Proof of Theorem~\ref{thm:derived}}
By \eqref{eq:mukai-stable} and the shift argument following it,
we may assume that $\Phi$ is symplectic. The case $\rho(S)\geq3$
is \cite[Theorem~1.3]{LYZ}. For $\rho(S)=2$, put
$L=\widetilde{\NS}(S)\simeq U\oplus\NS(S)$.
Since $\NS(S)(-1)$ is again anisotropic of signature $(1,1)$,
Proposition~\ref{prop:binary-congruence} applied to
$L(-1)\simeq U\oplus\NS(S)(-1)$ shows that
$\W^\pm(L(-1))$ is a congruence subgroup.
Theorem~\ref{thm:congruence-reflective} gives
\[
 \Phi^{\widetilde{\NS}}\in\widetilde O^+(L)
 =\W(L)\cap O^+(L).
\]
Theorem~\ref{thm:reflective-criterion} proves the assertion.

\subsection{Proof of Theorem~\ref{thm:main}}
 The cases $\rho(S)\geq3$ and
$\rho(S)=2$ with $\Aut(S)\not\cong\Z$ are covered by
\cite[Theorem~1.3 and Section~7.2]{LYZ}. In Picard number one,
$\Aut(S)$ is either trivial or $\Z/2\Z$. 
The remaining case has $\rho(S)=2$ and $\Aut(S)\cong\Z$.
Then $\NS(S)$ is anisotropic by Proposition~\ref{prop:cyclic},
and Theorem~\ref{thm:derived} applies to the derived pullback.

\appendix
\section{A character of an integral orthogonal group}\label{sec:full-obstruction}

We establish the obstruction asserted in Example~\ref{ex:rank-two-full}.
Realize $L=U\oplus U(9)$ as the order
\[
 R=\left\{\begin{pmatrix}x&z\\-9w&y\end{pmatrix}:x,y,z,w\in\Z\right\},
 \qquad q(X)=\det X.
\]
Put $D=\operatorname{diag}(1,-1)$ and
$\Gamma^*=R^\times=\Gamma_0(9)\rtimes\langle D\rangle$.

\begin{lemma}\label{lem:modular-character}
There is a homomorphism $\chi:\Gamma^*\to\Z$ such that
\[
 \chi(-I)=\chi(D)=0,\qquad
 \chi\!\begin{pmatrix}-2&1\\-9&4\end{pmatrix}=-1.
\]
\end{lemma}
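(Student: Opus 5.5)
We construct $\chi$ from the cusp structure of $\Gamma_0(9)$. First we build a character on $\overline{\Gamma}=\Gamma_0(9)/\{\pm I\}$, and then we check that it is invariant under conjugation by $D$. A homomorphism $\chi_0:\Gamma_0(9)\to\Z$ that kills $-I$ and satisfies $\chi_0(DMD^{-1})=\chi_0(M)$ extends to $\Gamma^*=\Gamma_0(9)\rtimes\langle D\rangle$ by setting $\chi(D)=0$. Indeed, in a semidirect product with $\Z$-valued target, $D$-invariance is exactly the compatibility condition for this extension, and $\chi(D)=0$ is then automatic because $D^2=I$.

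\textbf{Step 1: presentation of $\overline\Gamma$.} The group $\overline\Gamma$ has index $12$ in $\mathrm{PSL}_2(\Z)$ and contains no elliptic elements. The curve $X_0(9)$ has genus $0$ and four cusps, namely $\infty$, $0$, $1/3$ and $2/3$. We choose parabolic generators $P_\infty,P_0,P_{1/3},P_{2/3}$, each positively oriented, that is, conjugate in $\mathrm{PSL}_2(\R)$ to $z\mapsto z+w$ with $w>0$. After suitable ordering they satisfy the single relation $P_\infty P_0P_{1/3}P_{2/3}=1$ and generate a free group of rank $3$. Consequently a homomorphism $\overline\Gamma\to\Z$ is determined by integers $a_c=\chi_0(P_c)$ subject only to $\sum_c a_c=0$. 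We check that $g=\begin{pmatrix}-2&1\\-9&4\end{pmatrix}$ has trace $2$ and unique fixed point $z=1/3$, since the fixed-point equation is $9z^2-6z+1=0$. Hence $g$ is a primitive parabolic fixing the cusp $1/3$, so $g=P_{1/3}^{\pm1}$ for a suitable choice of $P_{1/3}$ in its conjugacy class. We fix the orientation so that $g=P_{1/3}$.

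\textbf{Step 2: the $D$-action.} Conjugation by $D$ corresponds to $z\mapsto -z$. This map reverses orientation, so it sends positively oriented parabolics to negatively oriented ones. On cusps it fixes $\infty$ and $0$ and swaps $1/3$ with $-1/3\sim 2/3$. Hence, up to conjugacy, $D$ sends $P_\infty\mapsto P_\infty^{-1}$, $P_0\mapsto P_0^{-1}$ and $P_{1/3}\mapsto P_{2/3}^{-1}$; the last is confirmed by the explicit computation $DgD=\begin{pmatrix}-2&-1\\9&4\end{pmatrix}$, which fixes $-1/3$. Since characters are class functions, $D$-invariance is equivalent to $a_\infty=a_0=0$ and $a_{1/3}=-a_{2/3}$. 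We set $a_{1/3}=-1$ and $a_{2/3}=1$. The relation $\sum_c a_c=0$ then holds, $\chi(g)=-1$, and $\chi(-I)=0$ because $\chi$ factors through $\overline\Gamma$.

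\textbf{Main obstacle.} The delicate point is the sign and orientation bookkeeping. One must verify that the product relation really involves the positively oriented generators with all exponents $+1$, so that $\sum_c a_c=0$ is the only constraint. One must also confirm that $DgD$ is conjugate in $\Gamma_0(9)$ to $P_{2/3}^{-1}$ rather than $P_{2/3}$. I would settle both points by an explicit fundamental domain (for instance, coset representatives of $\Gamma_0(9)$ in $\SL_2(\Z)$), or, as a cross-check, by computing $\chi$ on a set of generators and verifying the relation directly.
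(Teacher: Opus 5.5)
Your proof is correct, but it takes a different route from the paper.

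\textbf{How the two routes differ.} The paper argues combinatorially. It puts weights on the $T$-edges of the $12$-vertex Schreier graph of $\Gamma_0(9)\backslash\SL_2(\Z)\cong\mathbb P^1(\Z/9\Z)$. It checks that $S^2$ and $(ST)^3$ have total weight zero at every vertex. It gets $D$-invariance from $w(v)=-w(vDT^{-1})$. It evaluates $\chi(g)$ along the explicit word $S^{-1}T^{-5}S^{-1}T^{-2}S^{-1}T^{-1}$.

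You instead use the signature of $\Gamma_0(9)$ (genus $0$, four cusps, no elliptic points) and the canonical presentation of a genus-zero Fuchsian group. You then read off $D$-invariance from the cusp permutation and the orientation reversal.

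\textbf{What each buys.} The paper's check is finite and self-contained. Yours relies on the structure theory of Fuchsian groups, but it explains the answer. It shows that $\operatorname{Hom}(\Gamma^*,\Z)\cong\Z$, generated by the character with cusp values $(a_\infty,a_0,a_{1/3},a_{2/3})=(0,0,-1,1)$. These are exactly the loop weights the paper's cochain assigns to the four $T$-orbits: $0$ at $[0:1]$, total $0$ on the $9$-cycle, $-1$ at $c_1=[3:1]$, and $+1$ at $c_2=[3:2]$.

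\textbf{Two points to tighten.}

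First, primitivity. Trace $2$ and fixed point $1/3$ do not give primitivity, and you need it: otherwise $\chi(g)$ would be a proper multiple of $a_{1/3}$. It follows from $g=ATA^{-1}$ with $A=\left(\begin{smallmatrix}1&0\\3&1\end{smallmatrix}\right)$. Thus $g$ generates the stabilizer of $1/3$ already in $\mathrm{PSL}_2(\Z)$. The same identity shows $g$ is positively oriented, so no orientation has to be chosen. In any case the solution set is symmetric under $a\mapsto -a$.

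Second, the items you list as the main obstacle need no fundamental domain.
\begin{itemize}
\item The canonical generators come from loops around the punctures, taken with the orientation induced from $\mathbb H$. So they all have the same orientation type and multiply to $1$.
\item Conjugation $D\left(\begin{smallmatrix}a&b\\c&d\end{smallmatrix}\right)D=\left(\begin{smallmatrix}a&-b\\-c&d\end{smallmatrix}\right)$ reverses the translation direction of every parabolic. Together with the cusp permutation, this gives $DP_{1/3}D\sim P_{2/3}^{-1}$ in $\Gamma_0(9)$.
\end{itemize}
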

\begin{proof}
Identify $\Gamma_0(9)\backslash\SL_2(\Z)$ with
$\mathbb P^1(\Z/9\Z)$ via the projective class of the bottom row.
Use the representatives
\[
 \infty=[0:1],\qquad t=[1:t]\quad(t\in\Z/9\Z),
 \qquad c_1=[3:1],\quad c_2=[3:2].
\]
For $S=\left(\begin{smallmatrix}0&-1\\1&0\end{smallmatrix}\right)$
and $T=\left(\begin{smallmatrix}1&1\\0&1\end{smallmatrix}\right)$,
assign weight zero to every $S$-edge in the Schreier graph
and weight $w(v)$ to $v\to vT$, where
\[
 w(3)=w(c_1)=-1,\qquad w(5)=w(c_2)=1,
 \qquad w(v)=0\text{ otherwise}.
\]
Reverse edges have opposite weights. At each vertex, the
relations $S^2=(ST)^3=1$ in $\mathrm{PSL}_2(\Z)$
have total weight zero. Hence path
weights define a homomorphism $\chi_0:\Gamma_0(9)\to\Z$
with $\chi_0(-I)=0$. The weights also satisfy
$w(v)=-w(vDT^{-1})$, so $DSD=-S$ and $DTD=T^{-1}$ imply
that $\chi_0$ is invariant under conjugation by $D$.
Thus $\chi(\gamma D^\epsilon)=\chi_0(\gamma)$ defines the
required extension. Finally,
\[
 S^{-1}T^{-5}S^{-1}T^{-2}S^{-1}T^{-1}
 =\begin{pmatrix}-2&1\\-9&4\end{pmatrix}
\]
has total weight $-1$.
\end{proof}

For $P,Q\in\Gamma^*$ with $\det P=\det Q$, set
$F_{P,Q}(X)=PXQ^{-1}$ and $\tau(X)=-\operatorname{adj}(X)$.
Let $H\subset O(L)$ be generated by all such $F_{P,Q}$ and by $\tau$.
The identity $\tau F_{P,Q}\tau=F_{Q,P}$ and uniqueness of
$(P,Q)$ up to simultaneous sign show that
\begin{equation}\label{eq:orthogonal-character}
 \Psi:H\longrightarrow\Z,\qquad
 \Psi(F_{P,Q}\tau^\epsilon)=\chi(P)+\chi(Q)
\end{equation}
is a homomorphism.

\begin{proposition}\label{prop:character-vanishing}
Every reflective involution belongs to $H$, and $\W(L)\subset\ker\Psi$.
\end{proposition}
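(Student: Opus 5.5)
We plan to use the standard description of $O(R\otimes\Q)$ for the quaternion algebra $M_2(\Q)$ with $q=\det$. Every isometry of $V=R\otimes\Q$ has the form $X\mapsto aPXQ^{-1}$ or $X\mapsto aP\tau(X)Q^{-1}$. Here $P,Q\in\mathrm{GL}_2(\Q)$ and $a^2\det P=\det Q$, and the pair is unique up to simultaneous scalar. We will show that each reflective involution preserving $R$ can be written with $P,Q\in\Gamma^*=R^\times$ and $\det P=\det Q$, possibly composed with $\tau$. Then we will compute $\Psi$ on each such generator and show it vanishes. Since $\Psi$ is a homomorphism on $H$, this gives $\W(L)\subset\ker\Psi$.

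\textbf{Step 1: reflections in $(\pm2)$-vectors.} For $r\in R$ with $r^2=\pm2$, i.e.\ $\det r=\pm1$, the reflection is
\[
 s_r(X)=X-\frac{(X,r)}{q(r)}\,r .
\]
Using $(X,r)=\operatorname{tr}(X\operatorname{adj} r)$ and $\operatorname{adj} r=\det(r)\,r^{-1}$, one checks the standard formula $s_r(X)=-r\,\operatorname{adj}(X)\,r^{-1}\det(r)/\det(r)$. More precisely, $s_r=F_{r,\,?}\tau$ up to sign bookkeeping, of the shape $X\mapsto r\,\tau(X)\,r^{-1}$ up to the sign of $\det r$. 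When $\det r=1$ we get $s_r=F_{r,r}\circ\tau$ up to harmless sign. When $\det r=-1$ we get $F_{r,r}\tau$ composed with $-1$, and $-1=F_{I,-I}$ (indeed $-1=F_{-I,I}$ with both dets $1$). Since $r\in R$ with $\det r=\pm1$, we have $r\in R^\times=\Gamma^*$, so $s_r\in H$. Then
\[
 \Psi(s_r)=\chi(r)+\chi(r)+\chi(\pm I)=2\chi(r)+\chi(\pm I).
\]
This is not obviously $0$. However, $s_r$ is an involution, so $2\Psi(s_r)=\Psi(s_r^2)=0$, and since $\Z$ is torsion-free, $\Psi(s_r)=0$. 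The same argument kills every involution in $H$. So the real content is membership in $H$.

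\textbf{Step 2: isotropic reflective involutions $j$.} We will take a primitive isotropic $u\in R$, a rank-one matrix. After conjugating by some $F_{P,Q}$ with $P,Q\in\Gamma^*$, we aim to move $u$ to a standard cusp representative; $\Gamma_0(9)$ has four cusps, and we will use the conditions on $j$ together with $\Gamma^*$. Then $j$ is determined by $u$ and the data of $j$ on $u^\perp/\Z u$ and on a complement. Since $j$ is an isometry of $L$, it lies in $O(L)$, and we will write $j=F_{P,Q}\tau^\epsilon$ with rational $P,Q$. Integrality of $j$ on $R$, together with the trivial action on $A_L$, should force $P,Q$ to be units of the order $R$ after rescaling. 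This follows because $R$ is an Eichler order, whose normalizer mod $\Q^\times$ is $R^\times$ extended by Atkin--Lehner elements. The condition of acting trivially on $A_L\cong(\Z/9)^2$ should exclude the Atkin--Lehner part; that is the expected argument. Once $j\in H$, the involution argument again gives $\Psi(j)=0$.

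\textbf{Main obstacle.} The main obstacle is showing that $H$ contains all of $\widetilde O(L)$, or at least all reflective involutions. Concretely, every integral isometry has $P,Q$ in the normalizer of $R$, and the discriminant-triviality condition rules out Atkin--Lehner $W_9$ and cases with $\det P=-\det Q$. I would first prove this general lemma by computing $\mathrm{N}(R)$ for the level-$9$ Eichler order. Then I would check the action of $X\mapsto W_9XW_9^{-1}$-type maps on $A_L$ directly, and verify that they act nontrivially. Steps 1 and 2 then follow from this lemma uniformly, with vanishing of $\Psi$ by torsion-freeness of $\Z$.
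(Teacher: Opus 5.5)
Your plan is correct in outline, but Step 2 takes a more global route than the paper.

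\textbf{Comparison with the paper.} The paper never describes general elements of $O(L)$. For an isotropic reflective involution $j$ it shows $j\in\SO(L_\Q)$, since the $(-1)$-eigenspace is a nondegenerate plane, and writes $j=F_{P,Q}$. It then uses $j^2=1$ together with the $P$-stable rational line $\operatorname{im}u$ (from $Pu=-uQ$) to rescale to $P^2=Q^2=I$ and $\det P=\det Q=-1$. Only then does it invoke the determinant-$\pm1$ normalizer $R^\times\sqcup\varpi R^\times$, the left/right-order argument, and the fact that $\varpi$ swaps the two summands of $A_L$.

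You instead aim at $\widetilde O(L)\subseteq H$ for all elements; your argument in fact gives $O(L)=H\sqcup F_{\varpi,\varpi}H$. This treats roots and isotropic involutions uniformly, because $s_r(x)-x=\mp(x,r)r\in L$ for $x\in L^\vee$, and it makes Step 1 redundant. The price is threefold:
\begin{enumerate}
\item You need the full normalizer $N(R)=\Q^\times R^\times\sqcup\Q^\times\varpi R^\times$. This is what gives $\det P\in\pm(\Q^\times)^2$ and makes the rescaling automatic, whereas the paper gets it from the structure of $j$.
\item You must handle the $\tau$-coset. This is harmless, since $\operatorname{adj}R=R$.
\item You must prove the step you only assert: that $PRQ^{-1}=R$ forces $P,Q\in N(R)$. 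This is the comparison of left and right orders, $O_l(PRQ^{-1})=PRP^{-1}$ and $O_r(PRQ^{-1})=QRQ^{-1}$. The same argument yields $PQ^{-1}\in R^\times$, so $P$ and $Q$ lie in the same coset before you apply the discriminant argument.
\end{enumerate}
Two smaller points. The cases ``$\det P=-\det Q$'' never occur: once $\det P,\det Q\in\{\pm1\}$, the isometry condition $a^2\det P=\det Q$ forces $a=\pm1$ and $\det P=\det Q$. The detour through cusps of $\Gamma_0(9)$ is unnecessary.

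\textbf{Error in Step 1.} Your explicit formula is wrong, although the conclusion survives. The map $\pm F_{r,r}\tau\colon X\mapsto \pm r\,\tau(X)\,r^{-1}$ sends $I$ to $\mp I$. By contrast, $s_r(I)=I-\delta\operatorname{tr}(r)\,r$ with $\delta=\det r$.

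Your own computation already signals the problem. For the root $r=\left(\begin{smallmatrix}-2&1\\-9&4\end{smallmatrix}\right)$, your formula gives $\Psi(s_r)=2\chi(r)=-2$, which contradicts $s_r^2=1$. Indeed $(F_{r,r}\tau)^2=F_{r^2,r^2}\neq 1$, so $F_{r,r}\tau$ is not even an involution.

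The correct identity is $s_r=F_{r,\delta r^{-1}}\tau$, that is, $s_r(X)=-\delta\, r\operatorname{adj}(X)\,r$. It follows from $r\operatorname{adj}(X)\,r=(X,r)\,r-\delta X$. Since $\delta r^{-1}\in R^\times$ has determinant $\delta$, this gives $s_r\in H$. It also gives directly $\Psi(s_r)=\chi(r)+\chi(\delta r^{-1})=\chi(\delta I)=0$. Your torsion-freeness argument for involutions is nonetheless correct, and it is exactly how the paper concludes.
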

\begin{proof}
Put $\varpi=\left(\begin{smallmatrix}0&1/3\\3&0\end{smallmatrix}\right)$.
The normalizer calculation for an Eichler order gives
\[
 \{P\in\mathrm{GL}_2(\Q):PRP^{-1}=R,\ \det P=\pm1\}
 =R^\times\sqcup \varpi R^\times;
\]
see \cite[Section~23.1, equation~(23.1.4)]{Voight}.
Indeed, away from $3$ the determinant condition forces the
local scalar factor to be a unit; at $3$ the additional
normalizer coset is represented by $\varpi$.

For a root $r\in R$, put $\delta=\det r=\pm1$. Then
$s_r=F_{r,\delta r^{-1}}\tau\in H$.
Now let $j$ be an isotropic reflective involution with $j(u)=-u$.
By definition, its $-1$ eigenspace is nondegenerate and intersects
$u^\perp$ in $\Q u$, hence has dimension two. Thus
$j\in\SO(L_\Q)$. The left--right description of the special
orthogonal group of the determinant form gives
\[
 j(X)=PXQ^{-1},\qquad P,Q\in\mathrm{GL}_2(\Q),\qquad
 \det P=\det Q.
\]
Since $j^2=1$, we have $P^2=Q^2=cI$ for some $c\in\Q^\times$.
Since $Pu=-uQ$, the rational line $\operatorname{im}u$ is
$P$-invariant, whence $c=\lambda^2$ with $\lambda\in\Q^\times$.
Neither matrix is scalar: together with $P^2=Q^2=cI$ and
$\det P=\det Q$, this would imply $j=\pm1$.
Rescaling both by $\lambda^{-1}$ gives $P^2=Q^2=I$ and
$\det P=\det Q=-1$.
The left and right multiplier orders of $R$ are both $R$.
Applying this to $PRQ^{-1}=R$ gives $PRP^{-1}=QRQ^{-1}=R$.
Thus $P,Q$ belong to $R^\times\sqcup \varpi R^\times$.
Since $PRQ^{-1}=R$, we have $PQ^{-1}\in R^\times$, so they
belong to the same coset. If both lie in $\varpi R^\times$, then
$j$ exchanges the two summands of
\[
 A_L=\langle a/9\rangle\oplus\langle b/9\rangle
 \simeq(\Z/9\Z)^2:
\]
left--right multiplication by units acts diagonally on $A_L$,
whereas $X\mapsto \varpi X\varpi^{-1}$ sends $(z,w)$ to $(-w,-z)$.
This contradicts the trivial discriminant action of $j$.
Consequently $P,Q\in R^\times$, and $j\in H$.
Since $\Z$ is torsion-free, $\Psi$ vanishes on every involution
in $H$, and consequently on $\W(L)$.
\end{proof}

The isometry in Example~\ref{ex:rank-two-full} is $h=F_{B,I}$, where
\[
 B=\begin{pmatrix}-2&1\\-9&4\end{pmatrix}^{\!3}
  =\begin{pmatrix}-8&3\\-27&10\end{pmatrix}\in\Gamma_0(9).
\]
Left multiplication by $B\in\SL_2(\R)$ preserves the orientation
of positive two-planes, and the third and fourth columns of
$h-I$ are divisible by $9$. Thus $h\in\widetilde O^+(L)$.
Since $\Psi(h)=\chi(B)=-3$, we have $h\notin\W(L)$, and its powers
represent infinitely many distinct cosets of
$\W(L)\cap O^+(L)$ in $\widetilde O^+(L)$.

\bibliographystyle{amsplain}
\bibliography{main}
\end{document}